\documentclass[a4paper,12pt,reqno]{amsart}
\usepackage{amsmath, amssymb,amsthm}
\usepackage{enumerate}
\usepackage{cite}
\usepackage{graphicx}
\usepackage{subcaption}
\nonstopmode \numberwithin{equation}{section}
\setlength{\textwidth}{16cm}
\setlength{\textheight}{24cm}
\setlength{\oddsidemargin}{0cm}
\setlength{\topmargin}{-0.5cm}
\setlength{\evensidemargin}{0cm} \setlength{\footskip}{40pt}
\pagestyle{plain}
\theoremstyle{plain}
\newtheorem{theorem}{Theorem}[section]

\newtheorem{corollary}{Corollary}[section]

\newtheorem{remark}{Remark}[section]
\newtheorem{proposition}{Proposition}[section]
\theoremstyle{definition}
\newtheorem{example}{Example}[section]
\newtheorem{note}{Note}
\begin{document}
\bibliographystyle{amsplain}
\title{{{
Orthogonal Polynomials related to $\MakeLowercase{g}$-fractions with missing terms
}}}
\author{
Kiran Kumar Behera
}
\address{Department of Mathematics\\
 Indian Institute of Technology Roorkee-247 667,
 Uttarakhand, India}
\email{krn.behera@gmail.com}

\author{
A. Swaminathan
}
\address{
Department of  Mathematics  \\
Indian Institute of Technology Roorkee-247 667,
Uttarakhand, India
}
\email{swamifma@iitr.ac.in, mathswami@gmail.com}

\bigskip

\begin{abstract}
The purpose of the present paper is to investigate some
structural and qualitative aspects of two different
perturbations of the parameters of $g$-fractions.
In this context the concept of \emph{gap} $g$-fractions is introduced. While tail sequences of a continued fraction play a
significant role in the first perturbation,
Schur fractions are used
in the second perturbation of the $g$-parameters that are considered.
Illustrations are provided using Gaussian hypergeometric functions.
Using a particular gap $g$-fraction, some members
of the class of Pick functions are also identified.
\end{abstract}

%\subjclass[2000]{42C05, 33C47, 30B70 }

\keywords{Schur functions, Carath\'{e}odory function,
          $g$-fractions, Subordination, Gaussian hypergeometric functions,
          Pick functions}

\maketitle

\pagestyle{myheadings} \markboth{ Kiran Kumar Behera and A. Swaminathan }
{Orthogonal Polynomials related to $\MakeLowercase{g}$-fractions with missing terms}
\section{Introduction}
\label{sec:Introduction}
Given an arbitrary real sequence $\{g_k\}_{k=0}^{\infty}$,
a continued fraction expansion of the form
\begin{align}
\label{eqn:definition of g-fraction expansion}
\dfrac{1}{1}
\begin{array}{cc}\\$-$\end{array}
\dfrac{(1-g_0)g_1z}{1}
\begin{array}{cc}\\$-$\end{array}
\dfrac{(1-g_1)g_2z}{1}
\begin{array}{cc}\\$-$\end{array}
\dfrac{(1-g_2)g_3z}{1}
\begin{array}{cc}\\$-$\end{array}
\cdots,
\quad z\in\mathbb{C},
\end{align}
is called a $g$-fraction if the parameters
$g_j\in[0,1]$, $j\in\mathbb{N}\cup\{0\}$.
It terminates and equals a rational function if
$g_j\in\{0,1\}$ for some $j\in\mathbb{N}\cup\{0\}$.
If $0<g_j<1$, $j\in\mathbb{N}\cup\{0\}$,
the $g$-fraction \eqref{eqn:definition of g-fraction expansion}
still converges uniformly on every compact subsets of the slit
domain $\mathbb{C}\setminus[1,\infty)$
(see \cite[Theorem 27.5]{Wall_book} and
\cite[Corollary 4.60]{Jones-Thron-book}),
and in this case,
\eqref{eqn:definition of g-fraction expansion}
will represent an
analytic function, say $\mathcal{F}(z)$.

Such $g$-fractions are found having applications in diverse areas
like number theory
\cite{Alexei-Runckel-points-RJ},
dynamical systems
\cite{Alexei-ABC-flow-2013-JAT},
moment problems and analytic
function theory
\cite{Swami-mapping-prop-BHF-2014-JCA,
Kustner-g-fractions-2002-CMFT,
Kustner-g-fractions-JMAA-2007}.
In particular, \cite[Theorem 69.2]{Wall_book},
the Hausdorff moment problem
\begin{align*}
\nu_j=\int_{0}^{1}\sigma^jd\nu(\sigma),
\quad j\geq0,
\end{align*}
has a solution if and only if \eqref{eqn:definition of g-fraction expansion}
corresponds to a power series of the form
$1+\nu_1z+\nu_2z^2+\cdots$, $z\in\mathbb{C}\setminus[1,\infty)$.
Further, the $g$-fractions have also been used to study the geometric properties of ratios of Gaussian hypergeometric functions as well as their $q$-analogues,
(see the proofs of \cite[Theorem 1.5]{Kustner-g-fractions-2002-CMFT}
and \cite[Theorem 2.2]{Swami-mapping-prop-BHF-2014-JCA}).
Among several such results,
one of the most fundamental result concerning $g$-fraction is
\cite[Theorem 74.1]{Wall_book} in which holomorphic functions
having positive real part in $\mathbb{C}\setminus[1, \infty)$ are characterised.
Precisely, $\mathrm{Re}(\sqrt{1+z}\,\,\mathcal{F}(z))$ is positive
if and only if $\mathcal{F}(z)$ has a continued fraction expansion
of the form \eqref{eqn:definition of g-fraction expansion}.
Moreover, $\mathcal{F}(z)$ has the integral representation
\begin{align*}
\mathcal{F}(z)=\int_{0}^{1}\dfrac{d\phi(t)}{1-zt},
\quad
z\in\mathbb{C}\setminus[1,\infty],
\end{align*}
where $\phi(t)$ is a bounded non-decreasing function
having a total increase 1.

Many interesting results are also available in literature if we consider
subsets of $\mathbb{C}\setminus[1,\infty)$.
For instance, let $\mathbb{K}$ be the class of
holomorphic functions having a positive real part on the unit disk
$\mathbb{D}:=\{z:|z|<1\}$. Such functions denoted by $\mathcal{C}(z)$
are called Carath\'{e}odory functions and have the Riesz-Herglotz
representation \cite[Theorem 73.1]{Wall_book}
\begin{align*}
\mathcal{C}(z)=
\int_{0}^{2\pi}\dfrac{e^{it}+z}{e^{it}-z}d\phi(t)+qi,
\end{align*}
where $q$=$\mathrm{Im}\,\,\mathcal{C}(0)$.
Further, if $\mathcal{C}(z)\in\mathbb{K}$ is such that
$\mathcal{C}(\mathbb{R})\subseteq \mathbb{R}$
and normalised by $\mathcal{C}(0)=1$,
then the following continued fraction expansion can be derived
\cite{Wall-cf-and-bdd-analytic-function-1944-BAMS}
\begin{align}
\label{eqn:g-fraction expansion for carat function}
\dfrac{1-z}{1+z}C(z)=
\dfrac{1}{1}
\begin{array}{cc}\\$-$\end{array}
\dfrac{g_1\omega}{1}
\begin{array}{cc}\\$-$\end{array}
\dfrac{(1-g_1)g_2\omega}{1}
\begin{array}{cc}\\$-$\end{array}
\dfrac{(1-g_2)g_3\omega}{1}
\begin{array}{cc}\\$-$\end{array}
\cdots,
\quad z\in\mathbb{D},
\end{align}
where $w=-4z/(1-z)^2$. Note that here $g_0=0$.

Closely related to the Carath\'{e}odory functions are the Schur functions
$f(z)$ given by
\begin{align}
\label{eqn:relation between Schur and Carat functions}
C(z)=\dfrac{1+zf(z)}{1-zf(z)},
\quad z\in\mathbb{D}.
\end{align}
From \eqref{eqn:relation between Schur and Carat functions},
it is clear that $f(z)$ maps the unit disk $\mathbb{D}$
to the closed unit disk $\bar{\mathbb{D}}$.
In fact, if
\begin{align*}
\mathbb{B}=\{\mbox{f: f is holomorphic and }
             f(\mathbb{D})\subseteqq\bar{\mathbb{D}}\},
\end{align*}
\eqref{eqn:relation between Schur and Carat functions}
describes a one-one correspondence between the classes of
holomorphic functions $\mathbb{B}$ and $\mathbb{K}$.
The class $\mathbb{B}$ studied by J. Schur
\cite{Schur-papers-1917-1918},
is the well-known Schur algorithm.
This algorithm generates a sequence of rational functions
$\{f_n(z)\}_{n=0}^{\infty}$ from a given sequence
$\{\alpha_n\}_{n=0}^{\infty}$ of complex numbers lying in
$\bar{\mathbb{D}}$.
Then, with $\alpha_n$, $n\geq0$, satisfying some
positivity conditions, $f_n(z)\rightarrow f(z)$,
$n\rightarrow\infty$, where $f(z)\in\mathbb{B}$
is a Schur function
\cite{JNT-Survey-Schur-PC-Szego}.

It is interesting to note that $\alpha_n=f_n(0)$,
$n\geq0$ where $f_0(z)\equiv f(z)$. Moreover, using these
parameters $\{\alpha_n\}_{n\geq0}$,
the following Schur fraction can be
obtained \cite{JNT-Survey-Schur-PC-Szego}
\begin{align}
\label{eqn:definition of Schur fraction}
\alpha_{0}+
\dfrac{(1-|\alpha_0|^2)z}{\bar{\alpha}_0z}
\begin{array}{cc}\\$+$\end{array}
\dfrac{1}{\alpha_1}
\begin{array}{cc}\\$+$\end{array}
\dfrac{(1-|\alpha_1|^2)z}{\bar{\alpha}_1z}
\begin{array}{cc}\\$+$\end{array}
\dfrac{1}{\alpha_2}
\begin{array}{cc}\\$+$\end{array}
\dfrac{(1-|\alpha_2|^2)z}{\bar{\alpha}_2z}
\begin{array}{cc}\\$+$\end{array}
\cdots,
\end{align}
where $\alpha_j$ is related to the $g_j$
occurring in \eqref{eqn:g-fraction expansion for carat function}
by $\alpha_j=1-2g_j$, $j\geq1$.
Similar to $g$-fractions the Schur
fraction also terminates if $|\alpha_n|=1$ for some $n\in\mathbb{Z}_{+}$.
It may be noted that such a case occurs if and only if
$f(z)$ is a finite Blashke product \cite{JNT-Survey-Schur-PC-Szego}.

Let $\mathrm{A}_n(z)$ and $\mathrm{B}_n(z)$ denote the $n^{th}$ partial
numerator and denominator of
\eqref{eqn:definition of Schur fraction} respectively.
Then, with the initial values $\mathrm{A}_0(z)=\alpha_0$,
$\mathrm{B}_0(z)=1$, $\mathrm{A}_1(z)=z$
and
$\mathrm{B}_1(z)=\bar{\alpha}_0z$,
the following recurrence relations hold
\cite{JNT-Survey-Schur-PC-Szego}
\begin{align}
\label{eqn:Wallis formular for even approximants-Schur fraction}
\mathrm{A}_{2n}(z)&=\alpha_n\mathrm{A}_{2n-1}(z)+\mathrm{A}_{2n-2}(z)\nonumber\\
\mathrm{B}_{2n}(z)&=\alpha_n\mathrm{B}_{2n-1}(z)+\mathrm{B}_{2n-2}(z),
\quad n\geq1,
\end{align}
\begin{align}
\label{eqn:Wallis formular for odd approximants-Schur fraction}
\mathrm{A}_{2n+1}(z)&=\bar{\alpha}_nz\mathrm{A}_{2n}(z)+(1-|\alpha_n|^2)z\mathrm{A}_{2n-1}(z)\nonumber\\
\mathrm{B}_{2n+1}(z)&=\bar{\alpha}_nz\mathrm{B}_{2n}(z)+(1-|\alpha_n|^2)z\mathrm{B}_{2n-1}(z),
\quad n\geq1.
\end{align}
Using
\eqref{eqn:Wallis formular for even approximants-Schur fraction}
in
\eqref{eqn:Wallis formular for odd approximants-Schur fraction},
we get
\begin{align}
\label{eqn:Wallis formula for 2n+1, 2n-1, 2n-2}
\mathrm{A}_{2n+1}(z)&=z\mathrm{A}_{2n-1}(z)+\bar{\alpha}_pz\mathrm{A}_{2n-2}(z)\nonumber\\
\mathrm{B}_{2n+1}(z)&=z\mathrm{B}_{2n-1}(z)+\bar{\alpha}_pz\mathrm{B}_{2n-2}(z).
\end{align}
The relations
\eqref{eqn:Wallis formular for even approximants-Schur fraction}
and
\eqref{eqn:Wallis formula for 2n+1, 2n-1, 2n-2}
are sometimes written in the more
precise matrix form as
\begin{align}
\label{eqn:matrix relation for Schur approximants}
\left(
  \begin{array}{cc}
    \mathrm{A}_{2p+1} & \mathrm{B}_{2p+1} \\
    \mathrm{A}_{2p} & \mathrm{B}_{2p} \\
  \end{array}
\right)=
\left(
  \begin{array}{cc}
    z & \bar{\alpha}_pz \\
    \alpha_p & 1 \\
  \end{array}
\right)
\left(
  \begin{array}{cc}
    \mathrm{A}_{2p-1} & \mathrm{A}_{2p-1} \\
    \mathrm{A}_{2p-2} & \mathrm{A}_{2p-2} \\
  \end{array}
\right),
\quad p\geq1.
\end{align}

It is also known that
\cite{Njaastad-convernce-of-schur-algo-1990-PAMS}
\begin{align}
\label{eqn:reciprocal relations between An and Bn}
\mathrm{A}_{2n+1}(z)&=z\mathrm{B}_{2n}^{\ast}(z)
\quad ; \quad
\mathrm{B}_{2n+1}(z)=z\mathrm{A}_{2n}^{\ast}(z)\nonumber\\
\mathrm{A}_{2n}(z)&=\mathrm{B}_{2n+1}^{\ast}(z)
\quad ; \quad
\mathrm{B}_{2n}(z)=\mathrm{A}_{2n+1}^{\ast}(z).
\end{align}
Here, and in what follows,
$\mathrm{P}_n^{\ast}(z)=z^n\overline{\mathrm{P}_n(1/\bar{z})}$
for any polynomial $\mathrm{P}_n(z)$
with complex coefficients and of degree $n$.
From \eqref{eqn:reciprocal relations between An and Bn},
it follows that
\begin{align*}
\dfrac{\mathrm{A}_{2n+1}(z)}{\mathrm{B}_{2n+1}(z)}=
\left(\dfrac{\mathrm{A}_{2n}^{\ast}(z)}{\mathrm{B}_{2n}^{\ast}(z)}\right)^{-1},
\quad
\dfrac{\mathrm{A}_{2n}(z)}{\mathrm{B}_{2n}(z)}=
\left(\dfrac{\mathrm{A}_{2n+1}^{\ast}(z)}{\mathrm{B}_{2n+1}^{\ast}(z)}\right)^{-1}.
\end{align*}
Further, the even approximants of the Schur fraction
\eqref{eqn:definition of Schur fraction}
coincide with the the $n^{th}$ approximant of the Schur algorithm,
so that
$\mathrm{A}_{2n}(z)/\mathrm{B}_{2n}(z)$ converges to the
Schur function $f(z)$ as $n\rightarrow\infty$.

From the point of view of their applications, it is obvious
that the parameters $g_n$ of the $g$-fraction
\eqref{eqn:definition of g-fraction expansion} contain hidden
information about the properties of the dynamical systems or the
special functions they represent.
One way to explore this hidden
information is through perturbation;
that is, through a study of the consequences when some disturbance
is introduced in the parameter sequence $\{g_n\}$.
The main objective of the present manuscript is to study the
structural and qualitative aspects of two
perturbations.
The first is when a finite number of parameters $g_j$'s are missing
in which case we call the corresponding $g$-fraction as a \emph{gap}-$g$-fraction.
The second case is replacing $\{g_n\}_{n=0}^{\infty}$ by
a new sequence $\{g_n^{(\beta_k)}\}_{n=0}^{\infty}$ in which the
$j^{th}$ term $g_j$ is replaced by $g_j^{(\beta_k)}$.
The first case is illustrated
using Gaussian hypergeometric functions,
where we use the fact that
many $g$-fractions converge to ratios of
Gaussian hypergeometric functions in slit complex domains.
The second case is studied by applying the technique of coefficient
stripping \cite{Simon-book-vol1} to the sequence of Schur parameters $\{\alpha_j\}$.
This follows from the fact that the Schur fraction and the $g$-fraction are
completely determined by the related Schur parameters $\alpha_k$'s
and the $g$-parameters respectively, and that a perturbation in $\alpha_j$
produces a unique change in the $g_j$ and vice-versa.

The manuscript is organised as follows.
Section~\ref{sec:gap g fractions and structural relations}
provides structural relations for the three different cases
provided by the gap $g$-fraction. A particular ratio
of Gaussian hypergeometric functions is used to illustrate the results.
The modified $g$-fractions given in
Section~\ref{sec:gap g fractions and structural relations}
has a shift in $g_k$ to $g_{k+1}$ and so on for any fixed $k$.
Instead, the effect of changing $g_k$ to any another value
$g_k^{(\beta_k)}$ is discussed in
Section~\ref{sec:Perturbed Schur parameters}.
Illustrations of the results obtained in
Section~\ref{sec:gap g fractions and structural relations}
leading to characterization of a class of ratio of hypergeometric functions
such as Pick functions is outlined in
Section~\ref{sec:a class of pick functions and schur functions}.
\section{Gap $g$-fractions and structural relations}
\label{sec:gap g fractions and structural relations}
As the name suggests, gap-$g$-fractions correspond to the
$g$-sequence $\{g_k\}_{k=0}^{\infty}$
with missing parameters.
We study three cases in this section and in each the concept of tail sequences
of a continued fraction plays an important role. For more information on the tails
of a continued fraction, we refer to
\cite[Chapter II]{Lisa-Waadeland-book-cf-with-application}.

For $z\in\mathbb{C}\setminus[1,\infty)$, let $\mathcal{F}(z)$ be the continued fraction
\eqref{eqn:definition of g-fraction expansion} and
\begin{align}
\label{eqn:g-fraction expansion with gk missing}
\lefteqn{\mathcal{F}(k;z)=}\nonumber\\
&&
\dfrac{1}{1}
\begin{array}{cc}\\$-$\end{array}
\dfrac{(1-g_0)g_1z}{1}
\begin{array}{cc}\\$-$\end{array}
%\dfrac{(1-g_1)g_2z}{1}
%\begin{array}{cc}\\$-$\end{array}
\cdots
\dfrac{(1-g_{k-2})g_{k-1}z}{1}
\begin{array}{cc}\\$-$\end{array}
\dfrac{(1-g_{k-1})g_{k+1}z}{1}
\begin{array}{cc}\\$-$\end{array}
\dfrac{(1-g_{k+1})g_{k+2}z}{1}
\begin{array}{cc}\\$-$\end{array}
\cdots.
\end{align}
Note that
\eqref{eqn:g-fraction expansion with gk missing}
is obtained from
\eqref{eqn:definition of g-fraction expansion}
by removing $g_k$ for some arbitrary $k$
which cannot be obtained by letting $g_k=0$.
Let,
\begin{align}
\label{eqn:tail sequence for g fraction with gk missing}
\mathcal{H}_{k+1}(z)=
\dfrac{g_{k+1}z}{1}
\begin{array}{cc}\\$-$\end{array}
\dfrac{(1-g_{k+1})g_{k+2}z}{1}
\begin{array}{cc}\\$-$\end{array}
\dfrac{(1-g_{k+2})g_{k+3}z}{1}
\begin{array}{cc}\\$-$\end{array}
\cdots,
\end{align}
so that $-(1-g_k)\mathcal{H}_{k+1}(z)$ is the $(k+1)^{th}$
tail of $\mathcal{F}(z)$.
We note that
\cite[Theorem 1, p.56]{Lisa-Waadeland-book-cf-with-application},
the existence of $\mathcal{F}(z)$ guarantees the
existence of $\mathcal{H}_{k+1}(z)$.
Further, if
\begin{align*}
h(k;z)=(1-g_{k-1})\mathcal{H}_{k+1}(z),
\quad k\geq1,
\end{align*}
then, from \eqref{eqn:g-fraction expansion with gk missing}
and \eqref{eqn:tail sequence for g fraction with gk missing}
we obtain the rational function
\begin{align}
\label{eqn:tail sequence for g fraction with gk missing and terminating with hkz}
%\mathcal{F}(k;z)=
\dfrac{\mathcal{X}_{k}(h(k;z);z)}{\mathcal{Y}_{k}(h(k;z);z)}=
\dfrac{1}{1}
\begin{array}{cc}\\$-$\end{array}
\dfrac{(1-g_0)g_1z}{1}
\begin{array}{cc}\\$-$\end{array}
\cdots
\dfrac{(1-g_{k-2})g_{k-1}z}{1-h(k;z)}.
\end{align}
It is known
\cite[Theorem 2.1]{Jones-Thron-book},
that the $k^{th}$ approximant of
\eqref{eqn:definition of g-fraction expansion}
is given by the rational function
\begin{align*}
\dfrac{\mathcal{X}_{k}(0;z)}{\mathcal{Y}_{k}(0;z)}=
\dfrac{1}{1}
\begin{array}{cc}\\$-$\end{array}
\dfrac{(1-g_0)g_1z}{1}
\begin{array}{cc}\\$-$\end{array}
\cdots
\dfrac{(1-g_{k-2})g_{k-1}z}{1}=
\mathcal{S}_{k}(0,z),
\quad\mbox{say},
\end{align*}
and that
\begin{align*}
\dfrac{\mathcal{X}_{k}(h(k;z);z)}{\mathcal{Y}_{k}(h(k;z);z)}=
\dfrac{\mathcal{X}_{k}(0;z)-h(k;z)\mathcal{X}_{k-1}(0;z)}
      {\mathcal{Y}_{k}(0;z)-h(k;z)\mathcal{Y}_{k-1}(0;z)}.
\end{align*}
Then,
\begin{align*}
\dfrac{\mathcal{X}_{k}(h(k;z);z)}{\mathcal{Y}_{k}(h(k;z);z)}-
\dfrac{\mathcal{X}_{k}(0;z)}{\mathcal{Y}_{k}(0;z)}
&=
\dfrac{h(k;z)[\mathcal{X}_{k}(0;z)\mathcal{Y}_{k-1}(0;z)-\mathcal{X}_{k-1}(0;z)\mathcal{Y}_{k}(0;z)]}
      {\mathcal{Y}_{k}(0;z)[\mathcal{Y}_{k}(0;z)-h(k;z)\mathcal{Y}_{k-1}(0;z)]}\\
&=
\dfrac{h(k;z)z^{k-1}\prod_{j=1}^{k-1}(1-g_{j-1})g_j}
      {\mathcal{Y}_{k}(0;z)[\mathcal{Y}_{k}(0;z)-h(k;z)\mathcal{Y}_{k-1}(0;z)]},
\end{align*}
where the last equality follows from \cite[eqn.(2.1.9)]{Jones-Thron-book}.
Denoting $d_j=(1-g_{j-1})g_j$, $j\geq1$, we have from
\eqref{eqn:tail sequence for g fraction with gk missing and terminating with hkz}
\begin{align*}
\dfrac{\mathcal{X}_{k}(h(k;z);z)}{\mathcal{Y}_{k}(h(k;z);z)}=
\dfrac{\mathcal{X}_{k}(0;z)}{\mathcal{Y}_{k}(0;z)}-
                 \dfrac{\prod_{j=1}^{k-1}d_jz^{k-1}h(k;z)}
                       {\mathcal{Y}_{k-1}(0;z)\mathcal{Y}_{k}(0;z)h(k;z)-
                       [\mathcal{Y}_{k}(0;z)]^2}.
\end{align*}
\begin{note}
In the sequel, by $\mathcal{F}(z)$ we will mean the
unperturbed $g$-fraction as given in
\eqref{eqn:definition of g-fraction expansion}
with $g_k\in[0,1]$, $k\in\mathbb{Z}_{+}$.
Further, as the notation suggests, the rational function
$\mathcal{S}_k(0;z)$ is independent of the parameter $g_k$
and is known whenever $\mathcal{F}(z)$ is given.
The information of the missing parameter $g_k$
at the $k^{th}$ position
is stored in $h(k;z)$ and hence the notation
$\mathcal{F}(k;z)$.
\end{note}
It may also be noted that the polynomials
$\mathcal{Y}_{k}(0;z)$
can be easily computed from the Wallis recurrence
\cite[eqn. (2.1.6)]{Jones-Thron-book}
\begin{align*}
\mathcal{Y}_{j}(0;z)=\mathcal{Y}_{j-1}(0;z)-
                     (1-g_{j-2})g_{j-1}z\mathcal{Y}_{j-2}(0;z),
\quad j\geq2,
\end{align*}
with the initial values $\mathcal{Y}_{0}(0;z)=\mathcal{Y}_{1}(0;z)=1$.
Thus, we state our first result.
\begin{theorem}
\label{thm:structural relation for gk missing}
Suppose $\mathcal{F}(z)$ is given.
Let $\mathcal{F}(k;z)$ denote the
perturbed $g$-fraction in which the parameter $g_k$ is missing. Then, with
$d_j=(1-g_{j-1})g_j$, $j\geq1$
\begin{align}
\label{eqn:structural relation for gk missing}
\mathcal{F}(k;z)=\mathcal{S}_k(0;z)-
                 \dfrac{\prod_{j=1}^{k-1}d_jz^{k-1}h(k;z)}
                       {\mathcal{Y}_{k-1}(0;z)\mathcal{Y}_{k}(0;z)h(k;z)-[\mathcal{Y}_{k}(0;z)]^2},
\end{align}
where $\mathcal{Y}_k(0;z)$, $\mathcal{S}_k(0;z)$ and
$-(1-g_{k-1})^{-1}(1-g_k)h(k;z)$ are respectively,
the $k^{th}$ partial denominator,
the $k^{th}$ approximant and
the $(k+1)^{th}$ tail of $\mathcal{F}(z)$.
\end{theorem}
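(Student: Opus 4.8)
The plan is to reduce the infinite gap continued fraction $\mathcal{F}(k;z)$ to the finite continued fraction already analysed just before the statement, namely $\mathcal{X}_k(h(k;z);z)/\mathcal{Y}_k(h(k;z);z)$ of \eqref{eqn:tail sequence for g fraction with gk missing and terminating with hkz}, and then to read off \eqref{eqn:structural relation for gk missing} from the chain of identities derived there. Accordingly, the entire content of the proof concentrates in the single identification
\[
\mathcal{F}(k;z)=\frac{\mathcal{X}_k(h(k;z);z)}{\mathcal{Y}_k(h(k;z);z)}.
\]
Once this is in hand, everything that follows is bookkeeping.

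First I would isolate the tail of $\mathcal{F}(k;z)$ beginning at its $k$-th partial fraction. Because $g_k$ is deleted in \eqref{eqn:g-fraction expansion with gk missing}, the $k$-th partial numerator there is $(1-g_{k-1})g_{k+1}z$, and the subsequent numerators $(1-g_{k+1})g_{k+2}z,\,(1-g_{k+2})g_{k+3}z,\dots$ are precisely the numerators of $\mathcal{H}_{k+1}(z)$ in \eqref{eqn:tail sequence for g fraction with gk missing} past its leading one. Factoring the scalar $(1-g_{k-1})$ out of the leading numerator $(1-g_{k-1})\,g_{k+1}z$ then identifies the remaining continued fraction with $\mathcal{H}_{k+1}(z)$, so this tail equals $(1-g_{k-1})\mathcal{H}_{k+1}(z)=h(k;z)$. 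The convergence of $\mathcal{H}_{k+1}(z)$ that makes this substitution legitimate is the tail-existence statement \cite[Theorem 1, p.56]{Lisa-Waadeland-book-cf-with-application}, guaranteed by the standing hypothesis that $\mathcal{F}(z)$ exists. Replacing the convergent tail by its value $h(k;z)$ folds $\mathcal{F}(k;z)$ into the finite continued fraction whose terminating denominator is $1-h(k;z)$, i.e.\ the right-hand side of \eqref{eqn:tail sequence for g fraction with gk missing and terminating with hkz}, which establishes the displayed identification.

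With that identification secured, the body of the argument is already on the page. The equivalence-transformation formula expressing $\mathcal{X}_k(h(k;z);z)/\mathcal{Y}_k(h(k;z);z)$ through the approximants at $h=0$, followed by the determinant identity \cite[eqn.(2.1.9)]{Jones-Thron-book} that collapses the cross-product $\mathcal{X}_k(0;z)\mathcal{Y}_{k-1}(0;z)-\mathcal{X}_{k-1}(0;z)\mathcal{Y}_k(0;z)$ to $z^{k-1}\prod_{j=1}^{k-1}d_j$, yields \eqref{eqn:structural relation for gk missing} verbatim. It then remains only to name the three objects correctly: $\mathcal{Y}_k(0;z)$ is the $k$-th partial denominator by its Wallis recurrence, $\mathcal{S}_k(0;z)=\mathcal{X}_k(0;z)/\mathcal{Y}_k(0;z)$ is the $k$-th approximant, and, since $-(1-g_k)\mathcal{H}_{k+1}(z)$ is the $(k+1)$-th tail of $\mathcal{F}(z)$ while $\mathcal{H}_{k+1}(z)=(1-g_{k-1})^{-1}h(k;z)$, that tail equals $-(1-g_{k-1})^{-1}(1-g_k)h(k;z)$.

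The only genuinely delicate point is the tail-folding of the second paragraph: substituting the infinite object $h(k;z)$ as the last entry of an otherwise finite continued fraction is valid only because $\mathcal{H}_{k+1}(z)$ converges, and not by any purely formal rule. What makes the gap absorbable is the clean factorisation of the modified $k$-th numerator $(1-g_{k-1})g_{k+1}z$, which renders the remaining tail an exact scalar multiple of $\mathcal{H}_{k+1}(z)$; this is precisely why deleting $g_k$ concentrates the whole perturbation into the single quantity $h(k;z)$ rather than spreading it through the tail.
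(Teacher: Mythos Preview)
Your proposal is correct and follows essentially the same approach as the paper: the argument is exactly the derivation laid out in the paragraphs preceding the theorem statement, culminating in the identification $\mathcal{F}(k;z)=\mathcal{X}_k(h(k;z);z)/\mathcal{Y}_k(h(k;z);z)$ and the subsequent manipulation via the determinant identity. Your explicit discussion of why the tail-folding is legitimate (via convergence of $\mathcal{H}_{k+1}(z)$ and the factorisation of the modified $k$-th numerator) is a welcome clarification, but it does not depart from the paper's line of reasoning.
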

It may be observed that the right side of
\eqref{eqn:structural relation for gk missing}
is of the form
\begin{align*}
\dfrac{a(z)h(k;z)+b(z)}{c(z)h(k;z)+d(z)},
\end{align*}
with $a(z), b(z), c(z), d(z)$ being well defined polynomials.
Rational functions of such form are said to be
rational transformation of $h(k;z)$ and occur frequently in
perturbation theory of orthogonal polynomials.
For example, see \cite{Garza-Marcellan-szego-spectral-JCAM-2009,
Zhedanov-rational-spectral-JCAM-1997}.

A similar result for the perturbed $g$-fraction in which a finite number
of consecutive parameters are missing can be obtained by an analogous
argument that is stated directly as
\begin{theorem}
\label{thm:structural relation for finite number of parameters missing}
Let $\mathcal{F}(z)$ be given. Let
$\mathcal{F}(k,k+1,\cdots,k+l-1;z)$ denote the perturbed $g$-fraction in which
the l consecutive parameters $g_k,g_{k+1},\cdots,g_{k+l-1}$ are missing.
Then,
\begin{align}
\label{eqn:structural relation for finite number of parameters missing}
\lefteqn{\mathcal{F}(k,k+1,\cdots,k+l-1;z)=}\nonumber\\
&&\mathcal{S}_k(0;z)-\dfrac{\prod_{j=1}^{k-1}d_jz^{k-1}h(k,k+1,\cdots,k+l-1;z)}
                   {\mathcal{Y}_{k-1}(0;z)\mathcal{Y}_{k}(0;z)h(k,k+1,\cdots,k+l-1;z)-
                       [\mathcal{Y}_{k}(0;z)]^2},
\end{align}
where $-(1-g_{k-1})^{-1}(1-g_{k+l-1})h(k,k+1,\cdots,k+l-1;z)$
is the $(k+l)^{th}$ tail of $\mathcal{F}(z)$.
\end{theorem}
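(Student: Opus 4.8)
The plan is to reduce Theorem~\ref{thm:structural relation for finite number of parameters missing} to the single-gap case already established in Theorem~\ref{thm:structural relation for gk missing} by the observation that removing $l$ consecutive parameters affects the structural identity only through a different tail, while the \emph{head} of the continued fraction up to the $(k-1)^{th}$ approximant is untouched. Concretely, I would first define the analogue of \eqref{eqn:tail sequence for g fraction with gk missing} for the multi-gap setting, namely
\begin{align*}
\mathcal{H}_{k+l}(z)=
\dfrac{g_{k+l}z}{1}
\begin{array}{cc}\\$-$\end{array}
\dfrac{(1-g_{k+l})g_{k+l+1}z}{1}
\begin{array}{cc}\\$-$\end{array}
\cdots,
\end{align*}
and set $h(k,k+1,\cdots,k+l-1;z)=(1-g_{k-1})\mathcal{H}_{k+l}(z)$. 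The point is that deleting $g_k,\ldots,g_{k+l-1}$ from \eqref{eqn:definition of g-fraction expansion} produces a continued fraction whose first $k-1$ partial numerators and denominators are exactly those of $\mathcal{F}(z)$, and whose $k^{th}$ partial numerator becomes $(1-g_{k-1})g_{k+l}z$, so that the entire perturbed tail collapses into the single rational terminator $1-h(k,k+1,\cdots,k+l-1;z)$ appearing in the last denominator.

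Next I would invoke the terminated-fraction machinery of \cite[Theorem 2.1]{Jones-Thron-book} and the determinant formula \cite[eqn.(2.1.9)]{Jones-Thron-book} verbatim, exactly as in the derivation preceding Theorem~\ref{thm:structural relation for gk missing}. Since $\mathcal{S}_k(0;z)=\mathcal{X}_k(0;z)/\mathcal{Y}_k(0;z)$ and the polynomials $\mathcal{X}_j(0;z)$, $\mathcal{Y}_j(0;z)$ for $j\le k$ depend only on $g_0,\ldots,g_{k-1}$, they are literally the same objects whether one gap or $l$ gaps are removed. The only quantity that changes is the terminator $h$. Thus the computation
\begin{align*}
\dfrac{\mathcal{X}_{k}(h;z)}{\mathcal{Y}_{k}(h;z)}-\dfrac{\mathcal{X}_{k}(0;z)}{\mathcal{Y}_{k}(0;z)}
=\dfrac{h\,z^{k-1}\prod_{j=1}^{k-1}(1-g_{j-1})g_j}{\mathcal{Y}_{k}(0;z)[\mathcal{Y}_{k}(0;z)-h\,\mathcal{Y}_{k-1}(0;z)]}
\end{align*}
goes through unchanged with $h=h(k,k+1,\cdots,k+l-1;z)$, and rearranging with $d_j=(1-g_{j-1})g_j$ yields \eqref{eqn:structural relation for finite number of parameters missing}.

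The final bookkeeping step is to justify the stated identification of $-(1-g_{k-1})^{-1}(1-g_{k+l-1})h(k,k+1,\cdots,k+l-1;z)$ as the $(k+l)^{th}$ tail of $\mathcal{F}(z)$. Here I would compare the definition of $\mathcal{H}_{k+l}(z)$ above against the genuine $(k+l)^{th}$ tail of \eqref{eqn:definition of g-fraction expansion}, which carries the factor $(1-g_{k+l-1})$ in its leading partial numerator $(1-g_{k+l-1})g_{k+l}z$; dividing out the $(1-g_{k-1})$ inserted into $h$ and supplying the missing $(1-g_{k+l-1})$ accounts for the two normalising factors, just as $(1-g_{k-1})^{-1}(1-g_k)$ did in the single-gap statement. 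The existence of this tail is again guaranteed by the existence of $\mathcal{F}(z)$ via \cite[Theorem 1, p.56]{Lisa-Waadeland-book-cf-with-application}.

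I do not anticipate a genuine analytic obstacle, since the result is structurally parallel to Theorem~\ref{thm:structural relation for gk missing}; the main care-point is purely combinatorial, namely verifying that the leading partial numerator of the perturbed fraction is $(1-g_{k-1})g_{k+l}z$ rather than $(1-g_{k-1})g_{k}z$, and correspondingly tracking the normalising factor $(1-g_{k+l-1})$ instead of $(1-g_k)$ in the tail. Getting these two indices right is the only place a sign or subscript error could creep in, so I would double-check the edge cases $k=1$ and $l=1$ (the latter must recover Theorem~\ref{thm:structural relation for gk missing} exactly) before declaring the argument complete.
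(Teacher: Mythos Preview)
Your proposal is correct and follows exactly the route the paper indicates: the paper does not give a separate proof but simply states that the result ``can be obtained by an analogous argument'' to Theorem~\ref{thm:structural relation for gk missing}, and your write-up is precisely that analogous argument, with the appropriate tail $\mathcal{H}_{k+l}(z)$ replacing $\mathcal{H}_{k+1}(z)$ and the normalising factor $(1-g_{k+l-1})$ replacing $(1-g_k)$. The edge-case check $l=1$ you mention is a sensible sanity check but not strictly needed.
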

The next result is about the perturbation in which only two parameters
$g_k$ and $g_l$ are missing, where $l$ need not be $k\pm1$.
\begin{theorem}
\label{thm:structural relation for gk and gl missing}
Let $\mathcal{F}(z)$ be given.
Let $\mathcal{F}(k,l;z)$ denote the perturbed $g$-fraction
in which two parameters $g_k$ and $g_l$ are missing, where
we assume $l=k+m+1$, $m\geq1$.
Then
\begin{align}
\label{eqn:structural relation for gk and gl missing}
\mathcal{F}(k,l;z)=\mathcal{S}_k(0;z)-
                 \dfrac{\prod_{j=1}^{k-1}d_jz^{k-1}h(k,l;z)}
                       {\mathcal{Y}_{k-1}(0;z)\mathcal{Y}_{k}(0;z)h(k,l;z)-[\mathcal{Y}_{k}(0;z)]^2}
\end{align}
where $-(1-g_{k-1})^{-1}(1-g_k)h(k,l;z)$ is the perturbed $(k+1)^{th}$ tail of
$\mathcal{F}(z)$ in which $g_l$ is missing and is given by
\begin{align*}
\lefteqn{-(1-g_{k-1})^{-1}(1-g_k)h(k,k+m+1;z)=}\\
&&\mathcal{S}_{m}^{(k+1)}(0,z)-
\dfrac{\prod_{j=k+1}^{k+m}d_jz^m h(k+m+1;z)}
{[\mathcal{Y}^{(k+1)}_{m}(0;z)]^2-
\mathcal{Y}^{(k+1)}_{m-1}(0;z)\mathcal{Y}^{(k+1)}_{m}(0;z)h(k+m+1;z)}
\end{align*}
where $\mathcal{Y}^{(k+1)}_{m}(0;z)$ and $\mathcal{S}_{m}^{(k+1)}(0,z)$
are respectively, the
$m^{th}$ partial denominator and
$m^{th}$ approximant of the
$(k+1)^{th}$ tail of $\mathcal{F}(z)$.
Here, $-(1-g_{l-1})^{-1}(1-g_l)h(l;z)$ is the $(l+1)^{th}$ tail
of $\mathcal{F}(z)$.
\end{theorem}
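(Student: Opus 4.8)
The plan is to run the mechanism behind Theorem~\ref{thm:structural relation for gk missing} twice: once at level $k$ to produce \eqref{eqn:structural relation for gk and gl missing}, and once inside the $(k+1)^{\mathrm{th}}$ tail of $\mathcal{F}(z)$ to evaluate the perturbed tail $h(k,l;z)$. The guiding observation is that, since $l=k+m+1>k$, deleting both $g_k$ and $g_l$ leaves the parameters $g_0,\dots,g_{k-1}$ — and hence the polynomials $\mathcal{X}_k(0;z),\mathcal{Y}_k(0;z)$ and the approximant $\mathcal{S}_k(0;z)$ — completely unchanged; the whole perturbation is confined to the tail beyond level $k$.

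First I would establish \eqref{eqn:structural relation for gk and gl missing}. Let $h(k,l;z)$ be $(1-g_{k-1})$ times the tail $\mathcal{H}_{k+1}(z)$ after $g_l$ has been deleted from it, exactly as $h(k;z)=(1-g_{k-1})\mathcal{H}_{k+1}(z)$ was formed in the unperturbed case. Because the first $k-1$ partial numerators and denominators coincide with those of $\mathcal{F}(z)$, the perturbed fraction equals $\mathcal{X}_k(h(k,l;z);z)/\mathcal{Y}_k(h(k,l;z);z)$, a rational transformation of its tail value $h(k,l;z)$, and the identity
\[
\frac{\mathcal{X}_{k}(h;z)}{\mathcal{Y}_{k}(h;z)}=\frac{\mathcal{X}_{k}(0;z)-h\,\mathcal{X}_{k-1}(0;z)}{\mathcal{Y}_{k}(0;z)-h\,\mathcal{Y}_{k-1}(0;z)}
\]
combined with the determinant formula \cite[eqn.(2.1.9)]{Jones-Thron-book} reproduces the algebra of Theorem~\ref{thm:structural relation for gk missing} verbatim, the sole change being $h(k;z)\rightarrow h(k,l;z)$. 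No new computation is needed here.

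Next I would evaluate $h(k,l;z)$ itself. The $(k+1)^{\mathrm{th}}$ tail of $\mathcal{F}(z)$ is a continued fraction generated by the Wallis recurrence driven by the \emph{shifted} parameter sequence $g_{k+1},g_{k+2},\dots$; its partial denominators and approximants are the quantities $\mathcal{Y}^{(k+1)}_{\cdot}(0;z)$ and $\mathcal{S}^{(k+1)}_{\cdot}(0;z)$. Deleting $g_l=g_{k+m+1}$ from this tail is a single-gap perturbation of it, with the removed parameter occupying relative position $m+1$. Repeating the derivation of Theorem~\ref{thm:structural relation for gk missing} for this shifted continued fraction therefore yields the stated formula for $-(1-g_{k-1})^{-1}(1-g_k)h(k,k+m+1;z)$, in which $h(k+m+1;z)=h(l;z)$ is the (rescaled) tail of the shifted fraction; by the tail-of-a-tail relation this is in turn $(1-g_{l-1})$ times the $(l+1)^{\mathrm{th}}$ tail of $\mathcal{F}(z)$ up to the factor $(1-g_l)$, which is the closing assertion.

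The genuinely delicate part is the bookkeeping of indices and signs in this second step. The tail is not literally a $g$-fraction: it carries the leading numerator $-d_{k+1}z$ in place of the initial $1/1$, and this missing lead-in is what shifts the relevant objects from $\mathcal{Y}^{(k+1)}_{m+1},\mathcal{S}^{(k+1)}_{m+1}$ down to $\mathcal{Y}^{(k+1)}_{m},\mathcal{Y}^{(k+1)}_{m-1},\mathcal{S}^{(k+1)}_{m}$, and produces the power $z^{m}$ together with the product $\prod_{j=k+1}^{k+m}d_j$. The same leading minus sign, acting with the prefactor $-(1-g_{k-1})^{-1}(1-g_k)$, is what makes the denominator appear as $[\mathcal{Y}^{(k+1)}_m]^2-\mathcal{Y}^{(k+1)}_{m-1}\mathcal{Y}^{(k+1)}_m h(l;z)$ rather than with the opposite sign seen in \eqref{eqn:structural relation for gk and gl missing}. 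Verifying that the normalising factors $(1-g_{k-1})^{-1}(1-g_k)$ and $(1-g_{l-1})^{-1}(1-g_l)$ interlock consistently is the main obstacle, and the place where a sign or index slip would most easily occur.
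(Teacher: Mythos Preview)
Your proposal is correct and follows essentially the same approach as the paper: the paper, too, first repeats the derivation of Theorem~\ref{thm:structural relation for gk missing} at level $k$ with $h(k;z)$ replaced by $h(k,l;z)$, and then computes $h(k,l;z)$ by writing the perturbed $(k+1)^{\mathrm{th}}$ tail as a finite continued fraction terminating in $1-h(l;z)$ and applying the same determinant identity to the shifted approximants $\mathcal{X}^{(k+1)}_{l-k-1},\mathcal{Y}^{(k+1)}_{l-k-1}$. Your discussion of the index and sign bookkeeping (the missing initial $1/1$, the shift to $\mathcal{Y}^{(k+1)}_{m},\mathcal{Y}^{(k+1)}_{m-1}$, and the sign reversal in the denominator) is more explicit than the paper's, which simply invokes \cite[eqn.~(1.1.6)]{Lisa-Waadeland-book-cf-with-application} and substitutes $l=k+m+1$.
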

\begin{proof}
Let
\begin{align}
\label{eqn:perturbed tail sequence with gl missing}
\mathcal{H}_{k+1}(l;z)=
\dfrac{g_{k+1}z}{1}
\begin{array}{cc}\\$-$\end{array}
\dfrac{(1-g_{k+1})g_{k+2}z}{1}
\begin{array}{cc}\\$-$\end{array}
\cdots
\dfrac{(1-g_{l-1})g_{l+1}z}{1}
\begin{array}{cc}\\$-$\end{array}
\dfrac{(1-g_{l+1})g_{l+2}z}{1}
\begin{array}{cc}\\$-$\end{array}
\cdots
\end{align}
so that $-(1-g_k)\mathcal{H}_{k+1}(l;z)$ is the perturbed
$(k+1)^{th}$ tail of $\mathcal{F}(z)$ in which $g_l$ is missing.
Then we can write
\begin{align*}
\mathcal{F}(k,l;z)=
\dfrac{\mathcal{X}_{k}(h(k,l;z);z)}{\mathcal{Y}_{k}(h(k,l;z);z)}=
\dfrac{1}{1}
\begin{array}{cc}\\$-$\end{array}
\dfrac{(1-g_0)g_1z}{1}
\begin{array}{cc}\\$-$\end{array}
\cdots
\dfrac{(1-g_{k-2})g_{k-1}z}{1-h(k,l;z)},
\end{align*}
where $h(k,l;z)=(1-g_{k-1})\mathcal{H}_{k+1}(l;z)$.
Now, proceeding as in
Theorem~$\ref{thm:structural relation for gk missing}$,
we obtain
\begin{align*}
\mathcal{F}(k,l;z)=\mathcal{S}_k(0;z)-
                 \dfrac{\prod_{j=1}^{k-1}d_jz^{k-1}h(k,l;z)}
                       {\mathcal{Y}_{k-1}(0;z)\mathcal{Y}_{k}(0;z)h(k,l;z)-[\mathcal{Y}_{k}(0;z)]^2}
\end{align*}
Hence all that remains is to find the expression for $h(k,l;z)$
or $\mathcal{H}_{k+1}(l;z)$.

Now, let
\begin{align*}
\mathcal{H}_{l+1}(z)=
\dfrac{g_{l+1}z}{1}
\begin{array}{cc}\\$-$\end{array}
\dfrac{(1-g_{l+1})g_{l+2}z}{1}
\begin{array}{cc}\\$-$\end{array}
\dfrac{(1-g_{l+2})g_{l+3}z}{1}
\begin{array}{cc}\\$-$\end{array}
\cdots,
\end{align*}
and $h(l;z)=(1-g_{l-1})\mathcal{H}_{l+1}(z)$.
From
\eqref{eqn:perturbed tail sequence with gl missing}
and
\cite[eqn.(1.1.4), p.57]{Lisa-Waadeland-book-cf-with-application},
we have
\begin{align*}
-(1-g_k)\mathcal{H}_{k+1}(l;z)
&=
\dfrac{-(1-g_k)g_{k+1}z}{1}
\begin{array}{cc}\\$-$\end{array}
\dfrac{(1-g_{k+1})g_{k+2}z}{1}
\begin{array}{cc}\\$-$\end{array}
\cdots
\dfrac{(1-g_{l-2})g_{l-1}z}{1-h(l;z)}\\
&=
\dfrac{\mathcal{X}^{(k+1)}_{l-k-1}(h(l;z);z)}{\mathcal{Y}^{(k+1)}_{l-k-1}(h(l;z);z)}.
\end{align*}
It is clear that, the rational function
$[\mathcal{X}^{(k+1)}_{l-k-1}(0;z)/\mathcal{Y}^{(k+1)}_{l-k-1}(0;z)]$
is the approximant of the $(k+1)^{th}$ tail
$-(1-g_k)\mathcal{H}_{k+1}(z)$
of $\mathcal{F}(z)$.
Then, using \cite[eqn. (1.1.6), p. 57]{Lisa-Waadeland-book-cf-with-application}
we obtain
\begin{align*}
\dfrac{\mathcal{X}^{(k+1)}_{l-k-1}(h(l;z);z)}{\mathcal{Y}^{(k+1)}_{l-k-1}(h(l;z);z)}-
\dfrac{\mathcal{X}^{(k+1)}_{l-k-1}(0;z)}{\mathcal{Y}^{(k+1)}_{l-k-1}(0;z)}=
\dfrac{-h(l;z)\prod_{k}^{l-1}[d_jz]}
{\mathcal{Y}^{(k+1)}_{l-k-1}(0;z)[\mathcal{Y}^{(k+1)}_{l-k-1}(0;z)-h(l;z)\mathcal{Y}^{(k+1)}_{l-k-2}(0;z)]}
\end{align*}
Finally, using the fact that $l=k+m+1$, we obtain
\begin{align*}
\lefteqn{-(1-g_k)\mathcal{H}_{k+1}(k+m+1;z)=}\\
&&\mathcal{S}_{m}^{(k+1)}(0,z)-
\dfrac{\prod_{j=k+1}^{k+m}d_jz^m h(k+m+1;z)}
{[\mathcal{Y}^{(k+1)}_{k+m+1}(0;z)]^2-\mathcal{Y}^{(k+1)}_{k+m}(0;z)\mathcal{Y}^{(k+1)}_{k+m+1}(0;z)h(k+m+1;z)},
\end{align*}
where
\begin{align*}
\mathcal{S}_{m}^{(k+1)}(0,z)=
\dfrac{-(1-g_k)g_{k+1}z}{1}
\begin{array}{cc}\\$-$\end{array}
\dfrac{(1-g_{k+1})g_{k+2}z}{1}
\begin{array}{cc}\\$-$\end{array}
\cdots
\dfrac{(1-g_{k+m-1})g_{k+m}z}{1}
\end{align*}
is the $m^{th}$ approximant of the $(k+1)^{th}$ tail of $\mathcal{F}(z)$.
\end{proof}
As mentioned earlier, from
\eqref{eqn:structural relation for gk missing},
\eqref{eqn:structural relation for finite number of parameters missing} and
\eqref{eqn:structural relation for gk and gl missing},
it is clear that tail sequences play a
significant role in deriving the
structural relations for the gap $g$-fractions.
We now illustrate the role of tail
sequences
%with an application to
%Theorem~\ref{thm:structural relation for gk missing}
using particular $g$-fraction expansions.
\subsection{Tail sequences using hypergeometric functions}
The Gaussian hypergeometric function, with the
complex parameters $a$, $b$ and $c$ is defined by the power series
\begin{align*}
F(a,b;c;\omega)=
\sum_{n=0}^{\infty}\dfrac{(a)_n(b)_n}{(c)_n(1)_n}\omega^n,
\quad |\omega|<1
\end{align*}
where $c\neq0,-1,-2,\cdots$ and
$(a)_n=a(a+1)\cdots(a+n-1)$ is the Pochhammer symbol.

Two hypergeometric functions
$F(a_1,b_1;c_1;\omega)$ and $F(a_2,b_2;c_2,\omega)$
are said to be contiguous if the difference between the corresponding parameters
is at most unity. A linear combination of two contiguous hypergeometric
functions is again a hypergeometric function. Such relations are called contiguous relations
and have been used to explore many hidden properties of
the hypergeometric functions; for example, many special functions
can be represented by ratios of Gaussian hypergeometric functions.
For more details, we refer to \cite{Andrews-Askey-Roy-book}.

Consider the Gauss continued fraction \cite[p. 337]{Wall_book}
(with $b\mapsto b-1$ and $c\mapsto c-1$)
\begin{align}
\label{eqn:Gauss continued fraction}
\dfrac{F(a,b;c;\omega)}{F(a,b-1;c-1;\omega)}=
\dfrac{1}{1}
\begin{array}{cc}\\$-$\end{array}
\dfrac{(1-g_0)g_1\omega}{1}
\begin{array}{cc}\\$-$\end{array}
\dfrac{(1-g_1)g_2\omega}{1}
\begin{array}{cc}\\$-$\end{array}
\dfrac{(1-g_2)g_3\omega}{1}
\begin{array}{cc}\\$-$\end{array}
\cdots
\end{align}
where
\begin{align*}
g_{2p}=\dfrac{c-a+p-1}{c+2p-1},
\quad
g_{2p+1}=\dfrac{c-b+p}{c+2p},
\quad p\geq0.
\end{align*}
Let $k_p=1-g_p$, $p\geq0$. We aim to find the ratio of
hypergeometric functions given by the continued fraction
\begin{align*}
\dfrac{1}{1}
\begin{array}{cc}\\$-$\end{array}
\dfrac{k_1\omega}{1}
\begin{array}{cc}\\$-$\end{array}
\dfrac{(1-k_1)k_2\omega}{1}
\begin{array}{cc}\\$-$\end{array}
\dfrac{(1-k_2)k_3\omega}{1}
\begin{array}{cc}\\$-$\end{array}
\cdots
\end{align*}
For this, first note that from
\eqref{eqn:Gauss continued fraction},
we can write
\begin{align*}
\mathcal{R}(\omega)=
1-\dfrac{1}{k_0}\left[1-\dfrac{F(a,b-1;c-1;\omega)}{F(a,b;c;\omega)}\right]=
1-\dfrac{(1-k_1)\omega}{1-\dfrac{k_1(1-k_2)\omega}{1-\dfrac{k_2(1-k_3)\omega}
{1-\cdots}}}
\end{align*}
Now, replacing $b\mapsto b-1$ and $c\mapsto c-1$ in
the contiguous relation
\cite[eqn. 89.6, p. 336]{Wall_book}
we obtain
\begin{align}
\label{eqn:contiguous relation used in derivation of gauss cf}
F(a,b;c;\omega)-F(a,b-1;c-1;\omega)=
\dfrac{a(c-b)}{(c-1)c}\omega F(a+1,b;c+1;\omega)
\end{align}
Hence, with $k_0=(c-a-1)/(c-1)$, we have
\begin{align*}
\mathcal{R}(\omega)
&=1-\dfrac{c-1}{a}
\left[\dfrac{F(a,b;c;\omega)-F(a,b-1;c-1;\omega)}{F(a,b;c;\omega)}\right]\\
&=1-\dfrac{c-b}{c}\dfrac{F(a+1,b;c+1;\omega)}{F(a,b;c;\omega)}\\
&=(1-\omega)\dfrac{F(a+1,b;c;\omega)}{F(a,b;c;\omega)},
\end{align*}
where the last equality follows from the relation
\begin{align}
\label{eqn:contiguous relation gauss to kustner}
F(a,b;c;\omega)=(1-\omega)F(a+1,b;c;\omega)+\dfrac{c-b}{c}\omega F(a+1,b;c+1;\omega),
\end{align}
which is easily proved by comparing the coefficients of
$\omega^k$ on both sides.
Finally, using the well known result
\cite[eqn. 75.3, p. 281]{Wall_book}, we obtain
\begin{align}
\label{eqn:Kustner continued fraction}
\dfrac{F(a+1,b;c;\omega)}{F(a,b;c;\omega)}=
\dfrac{1}{1}
\begin{array}{cc}\\$-$\end{array}
\dfrac{\frac{b}{c}\omega}{1}
\begin{array}{cc}\\$-$\end{array}
\dfrac{\frac{(c-b)(a+1)}{c(c+1)}\omega}{1}
\begin{array}{cc}\\$-$\end{array}
\dfrac{\frac{(c-a)(b+1)}{(c+1)(c+2)}\omega}{1}
\begin{array}{cc}\\$-$\end{array}
\cdots
\end{align}
Note that the continued fraction
\eqref{eqn:Kustner continued fraction}
has also been derived by different means in
\cite{Kustner-g-fractions-2002-CMFT} and studied
in the context of geometric properties of
hypergeometric functions.

For further analysis, we establish the following
formal continued fraction expansion:
\begin{align}
\label{eqn:gap g fraction identity starting g1}
\dfrac{F(a+1,b;c+1;\omega)}{F(a,b;c;\omega)}=
%\dfrac{1}{1-\dfrac{(\frac{b}{c})(\frac{c-a}{c+1})\omega}
%         {1-\dfrac{(\frac{a+1}{c+1})(\frac{c-b+1}{c+2})\omega}
%         {1-\dfrac{(\frac{b+1}{c+2})(\frac{c-a+1}{c+3})\omega}
%         {1-\ddots}}}}
\dfrac{1}{1}
\begin{array}{cc}\\$-$\end{array}
\dfrac{\frac{b(c-a)}{c(c+1)}\omega}{1}
\begin{array}{cc}\\$-$\end{array}
\dfrac{\frac{(a+1)(c-b+1)}{(c+1)(c+2)}\omega}{1}
\begin{array}{cc}\\$-$\end{array}
\dfrac{\frac{(b+1)(c-a+1)}{(c+2)(c+3)}\omega}{1}
\begin{array}{cc}\\$-$\end{array}
\cdots
\end{align}
Suppose for now, the left hand side of
\eqref{eqn:gap g fraction identity starting g1}
is denoted by $\mathcal{G}_1^{(a,b,c)}(\omega)$. Again using, Gauss
continued fraction \cite[eqn. 89.9, p. 337]{Wall_book}
with $a\mapsto a+1$ and $c\mapsto c+1$,
we arrive at
\begin{align*}
[\mathcal{G}_1^{(a,b,c)}]^{-1}(\omega)=1-\dfrac{b(c-a)}{c(c+1)}\omega
\dfrac{F(a+1,b+1;c+2;\omega)}{F(a+1,b;c+1;\omega)}.
\end{align*}
In the relation
\eqref{eqn:contiguous relation used in derivation of gauss cf},
interchanging $a$ and $b$ and then replacing $a\mapsto a+1$ and
$c\mapsto c+1$, we get
\begin{align*}
1-\dfrac{b(c-a)}{c(c+1)}\omega\dfrac{F(a+1,b+1;c+2;\omega)}{F(a+1,b;c+1;\omega)}=
\dfrac{F(a,b;c;\omega)}{F(a+1,b;c+1;\omega)},
\end{align*}
which implies
\begin{align*}
\mathcal{G}_1^{(a,b,c)}(\omega)=
\dfrac{F(a+1,b;c+1;\omega)}{F(a,b;c;\omega)}.
\end{align*}
As mentioned, the right hand side is only a
formal expansion for the left hand side in
\eqref{eqn:gap g fraction identity starting g1}.
However, note the fact that the sequence $\{\mathcal{P}_j(\omega)\}_{j=0}^{\infty}$,
\begin{align*}
\mathcal{P}_{2j}(\omega)&=F(a+j,b+j;c+2j;\omega),\\
\mathcal{P}_{2j+1}(\omega)&=F(a+j+1,b+j;c+2j+1;\omega),
\quad j\geq0
\end{align*}
satisfies the difference equation
\begin{align}
\label{eqn:difference relation for correspondence}
\mathcal{P}_{j}(\omega)=\mathcal{P}_{j+1}(\omega)-
d_{j+1}\omega\mathcal{P}_{j+2}(\omega),
\quad j\geq0,
\end{align}
where
\begin{align*}
d_n=
\left\{
  \begin{array}{ll}
    \dfrac{(b+j)(c-a+j)}{(c+2j)(c+2j+1)}, & \hbox{$n=2j+1\geq1$, $j\geq0$;} \\
    \dfrac{(a+j)(c-b+j)}{(c+2j-1)(c+2j)}, & \hbox{$n=2j\geq2$, $j\geq1$.}
  \end{array}
\right.
\end{align*}
Thus, using
\cite[Theorem 1, pp. 295-296]{Lisa-Waadeland-book-cf-with-application},
we can conclude that the right side of
\eqref{eqn:gap g fraction identity starting g1}
indeed corresponds as well as converges to the
left side of
\eqref{eqn:gap g fraction identity starting g1},
which we state as
\begin{proposition}
\label{prop:correspondence and convergence of g1}
  The following correspondence and convergence properties hold.
\begin{enumerate}[(i)]
  \item With $a$, $b$, $c\neq0,-1,-2,\cdots$ complex constants,
        \begin{align}
        \label{eqn:continued fraction used in convergence correspondence prop}
         \dfrac{F(a+1,b;c+1;\omega)}{F(a,b;c;\omega)}\sim
         \dfrac{1}{1}
         \begin{array}{cc}\\$-$\end{array}
         \dfrac{\frac{(b)(c-a)}{c(c+1)}\omega}{1}
         \begin{array}{cc}\\$-$\end{array}
         \dfrac{\frac{(a+1)(c-b+1)}{(c+1)(c+2)}\omega}{1}
         \begin{array}{cc}\\$-$\end{array}
         \dfrac{\frac{(b+1)(c-a+1)}{(c+2)(c+3)}\omega}{1}
         \begin{array}{cc}\\$-$\end{array}
         \cdots
        \end{align}
  \item The continued fraction on the right side of
        \eqref{eqn:gap g fraction identity starting g1}
        converges to the meromorphic function $f(\omega)$ in the cut-plane $\mathfrak{D}$
        where $f(\omega)=F(a+1,b;c+1;\omega)/F(a,b;c;\omega)$ and
        $\mathfrak{D}=\{\omega\in\mathbb{C}:|\arg(1-\omega)|<\pi\}$. The convergence is uniform
        on every compact subset of $\{\omega\in\mathfrak{D}:f(\omega)\neq \infty\}$.
\end{enumerate}
\end{proposition}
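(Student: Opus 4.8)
The plan is to verify both assertions by appealing to the general correspondence-and-convergence theorem for continued fractions associated with a three-term difference equation, namely \cite[Theorem 1, pp. 295--296]{Lisa-Waadeland-book-cf-with-application}, whose hypotheses have in fact already been assembled in the discussion preceding the statement. The essential object is the sequence $\{\mathcal{P}_j(\omega)\}_{j=0}^{\infty}$ of contiguous hypergeometric functions, together with the difference equation \eqref{eqn:difference relation for correspondence}. Part~(i) is the \emph{correspondence} half and part~(ii) is the \emph{convergence} half of that single theorem, so the two items are not independent.

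For part~(i), the first step is to observe that the recurrence \eqref{eqn:difference relation for correspondence} is precisely the three-term relation linking the numerator/denominator sequences of a continued fraction whose partial numerators are $-d_{j+1}\omega$ and whose partial denominators are all $1$. I would identify $d_n$ with the coefficients displayed just before the proposition, check that $\mathcal{P}_0(\omega)=F(a,b;c;\omega)$ and $\mathcal{P}_1(\omega)=F(a+1,b;c+1;\omega)$ are nonzero in a neighbourhood of $\omega=0$ (both equal $1$ there), and conclude that the successive ratios $\mathcal{P}_{j+1}/\mathcal{P}_j$ are the tails of the continued fraction. Since each $\mathcal{P}_j$ is analytic at the origin with $\mathcal{P}_j(0)=1$, the formal power series of $\mathcal{P}_1/\mathcal{P}_0$ agrees term-by-term with the successive approximants of the right side of \eqref{eqn:gap g fraction identity starting g1}; this is exactly the correspondence $\sim$ asserted in \eqref{eqn:continued fraction used in convergence correspondence prop}. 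The algebraic identity $\mathcal{G}_1^{(a,b,c)}(\omega)=F(a+1,b;c+1;\omega)/F(a,b;c;\omega)$ established immediately above the proposition, via the contiguous relations \eqref{eqn:contiguous relation used in derivation of gauss cf} and \eqref{eqn:contiguous relation gauss to kustner}, then pins down the limit function.

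For part~(ii), I would invoke the convergence conclusion of the same theorem, which upgrades formal correspondence to genuine locally uniform convergence in the cut plane $\mathfrak{D}=\{\omega:|\arg(1-\omega)|<\pi\}$, away from the poles of the limit. The key inputs to check are that the $\mathcal{P}_j(\omega)$ are analytic on $\mathfrak{D}$ (each $F(a+j,b+j;c+2j;\omega)$ continues analytically from $|\omega|<1$ to the cut plane, since $c+2j\neq 0,-1,-2,\dots$ for all $j$ under the standing parameter hypotheses) and that the requisite growth/ratio conditions of \cite[Theorem 1]{Lisa-Waadeland-book-cf-with-application} are met by the explicit $d_n$, which behave like $1/4$ asymptotically. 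Granting these, the continued fraction converges to $f(\omega)=F(a+1,b;c+1;\omega)/F(a,b;c;\omega)$, uniformly on compact subsets of $\{\omega\in\mathfrak{D}:f(\omega)\neq\infty\}$, i.e. away from the zeros of $F(a,b;c;\omega)$.

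The main obstacle is not the correspondence, which is essentially formal, but the verification that the hypotheses of \cite[Theorem 1, pp. 295--296]{Lisa-Waadeland-book-cf-with-application} are genuinely satisfied by this parameter family \emph{uniformly in $j$}: one must confirm the analyticity of every $\mathcal{P}_j$ on the full cut plane and control the asymptotic behaviour of the $d_n$ so that the theorem's convergence criterion applies on all of $\mathfrak{D}$ rather than merely on $|\omega|<1$. Once that bookkeeping is in place, both parts follow directly from the cited theorem, and no separate argument is needed.
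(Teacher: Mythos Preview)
Your proposal is correct and follows essentially the same route as the paper: the paper's argument consists precisely of exhibiting the sequence $\{\mathcal{P}_j(\omega)\}$, verifying the three-term relation \eqref{eqn:difference relation for correspondence} with the explicit $d_n$, and then invoking \cite[Theorem~1, pp.~295--296]{Lisa-Waadeland-book-cf-with-application} to obtain both correspondence and convergence at once. Your write-up is in fact more explicit than the paper's about which hypotheses of that theorem need checking (analyticity of each $\mathcal{P}_j$ on $\mathfrak{D}$, asymptotics $d_n\to 1/4$), but the underlying strategy is identical.
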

We would like to mention here that the polynomial sequence
$\{\mathcal{Q}_n(\omega)\}$
corresponding to $\{\mathcal{P}_n(\omega)\}$
and that arises during the discussion of the convergence of
Gauss continued fractions
\cite[p. 294]{Lisa-Waadeland-book-cf-with-application}
is given by

\begin{align*}
\mathcal{Q}_{2j}(\omega)&=F(a+j,b+j;c+2j;\omega), \quad j\geq0 \\
\mathcal{Q}_{2j+1}(\omega)&=F(a+j,b+j+1;c+2j+1;\omega), \quad j\geq0.
\end{align*}
Also note that the continued fraction used in the right side of
\eqref{eqn:continued fraction used in convergence correspondence prop}
is
\begin{align*}
\dfrac{1}{1}
\begin{array}{cc}\\$-$\end{array}
\dfrac{k_1(1-k_{2})\omega}{1}
\begin{array}{cc}\\$-$\end{array}
\dfrac{k_{2}(1-k_{3})\omega}{1}
\begin{array}{cc}\\$-$\end{array}
\dfrac{k_{3}(1-k_{4})\omega}{1}
\begin{array}{cc}\\$-$\end{array}
\cdots.
\end{align*}
Here, we recall that $k_n=1-g_n$, $n\geq0$, where $\{g_n\}$ are the parameters appearing in the Gauss continued fraction
\eqref{eqn:Gauss continued fraction}.
The following result gives a kind of generalization of
Proposition $\ref{prop:correspondence and convergence of g1}$.
The correspondence and convergence properties of the
continued fractions involved can be discussed similar to the one for
$\mathcal{G}_1^{(a,b,c)}(\omega)$.
\begin{proposition}
\label{prop:continued fraction identity for gk}
Let,
\begin{align}
\label{eqn:gap g fraction identity starting gk}
\mathcal{G}_{n}^{(a,b,c)}(\omega)=
\dfrac{1}{1}
\begin{array}{cc}\\$-$\end{array}
\dfrac{k_n(1-k_{n+1})\omega}{1}
\begin{array}{cc}\\$-$\end{array}
\dfrac{k_{n+1}(1-k_{n+2})\omega}{1}
\begin{array}{cc}\\$-$\end{array}
\dfrac{k_{n+2}(1-k_{n+3})\omega}{1}
\begin{array}{cc}\\$-$\end{array}
\cdots
\end{align}
Then,
\begin{align*}
\mathcal{G}_{2j}^{(a,b,c)}(\omega)&=
\dfrac{F(a+j,b+j;c+2j;\omega)}{F(a+j,b+j-1;c+2j-1;\omega)}
%\dfrac{\mathcal{P}_{2j}(\omega)}{\mathcal{Q}_{2j-1}(\omega)},
\quad j\geq1,\\
\mathcal{G}_{2j+1}^{(a,b,c)}(\omega)&=
\dfrac{F(a+j+1,b+j;c+2j+1;\omega)}{F(a+j,b+j;c+2j;\omega)}
%\dfrac{\mathcal{Q}_{2j+1}(\omega)}{\mathcal{P}_{2j}(\omega)},
\quad j\geq0
\end{align*}
\end{proposition}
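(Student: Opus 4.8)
The plan is to recognise $\mathcal{G}_{n}^{(a,b,c)}(\omega)$ as nothing but the Gauss continued fraction \eqref{eqn:Gauss continued fraction} with its first $n$ partial numerators discarded, and then to read off the resulting ratio from a single sequence of contiguous hypergeometric functions. First I would record the partial numerators of $\mathcal{G}_{n}^{(a,b,c)}(\omega)$ in terms of the $d$-sequence of \eqref{eqn:Gauss continued fraction}. Writing $d_j=(1-g_{j-1})g_j$ and using $k_p=1-g_p$, the $m$-th partial numerator appearing in \eqref{eqn:gap g fraction identity starting gk} is
\begin{align*}
k_{n+m-1}(1-k_{n+m})\omega=k_{n+m-1}\,g_{n+m}\,\omega=d_{n+m}\,\omega,\qquad m\ge1,
\end{align*}
so that $\mathcal{G}_{n}^{(a,b,c)}(\omega)$ is exactly the continued fraction obtained from \eqref{eqn:Gauss continued fraction} by suppressing $d_{1}\omega,\dots,d_{n}\omega$ and restoring a leading $1/1$. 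From $d_j=k_{j-1}g_j$ together with the explicit values of $k_p$ and $g_p$ one computes the two parity cases $d_{2j+1}=\tfrac{(a+j)(c-b+j)}{(c+2j-1)(c+2j)}$ and $d_{2j+2}=\tfrac{(b+j)(c-a+j)}{(c+2j)(c+2j+1)}$, which are the coefficients that must appear below.

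Next I would introduce the unifying contiguous sequence
\begin{align*}
\mathcal{P}^{\ast}_{2j}(\omega)=F(a+j,b+j-1;c+2j-1;\omega),\qquad
\mathcal{P}^{\ast}_{2j+1}(\omega)=F(a+j,b+j;c+2j;\omega),\qquad j\ge0,
\end{align*}
and claim that $\mathcal{G}_{n}^{(a,b,c)}(\omega)=\mathcal{P}^{\ast}_{n+1}(\omega)/\mathcal{P}^{\ast}_{n}(\omega)$; reading this off for $n=2j$ and $n=2j+1$ reproduces precisely the two asserted identities. The key step is to verify the difference relation $\mathcal{P}^{\ast}_{i}(\omega)=\mathcal{P}^{\ast}_{i+1}(\omega)-d_{i+1}\,\omega\,\mathcal{P}^{\ast}_{i+2}(\omega)$ for every $i\ge0$. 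For even $i=2j$ this is the contiguous relation \eqref{eqn:contiguous relation used in derivation of gauss cf} under $a\mapsto a+j$, $b\mapsto b+j$, $c\mapsto c+2j$; for odd $i=2j+1$ it is the same relation after interchanging $a$ and $b$ (using $F(a,b;c;\omega)=F(b,a;c;\omega)$) and then shifting $a\mapsto a+j+1$, $b\mapsto b+j$, $c\mapsto c+2j+1$. In each case the emitted coefficient is exactly $d_{i+1}$, matching the values computed above, and the relation itself is confirmed by comparing coefficients of $\omega^{k}$ just as was done for $\mathcal{G}_{1}^{(a,b,c)}(\omega)$.

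Finally I would conclude. Dividing the difference relation by $\mathcal{P}^{\ast}_{i+1}$ and setting $u_i=\mathcal{P}^{\ast}_{i+1}/\mathcal{P}^{\ast}_{i}$ gives $u_i=(1-d_{i+1}\omega\,u_{i+1})^{-1}$, whose unfolding from $i=n$ is exactly the continued fraction \eqref{eqn:gap g fraction identity starting gk}; equivalently, invoking the correspondence-and-convergence theorem \cite[Theorem 1, pp. 295--296]{Lisa-Waadeland-book-cf-with-application} with the sequence $\{\mathcal{P}^{\ast}_{i}\}$, verbatim as in Proposition~\ref{prop:correspondence and convergence of g1}, shows that the shifted continued fraction with partial numerators $d_{n+1}\omega,d_{n+2}\omega,\dots$ corresponds to and converges to $\mathcal{P}^{\ast}_{n+1}(\omega)/\mathcal{P}^{\ast}_{n}(\omega)$. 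I expect the only delicate point to be the bookkeeping across the even/odd split: pairing each $\mathcal{P}^{\ast}_{i}$ with the correct contiguous triple and checking that the coefficient produced by the shifted/symmetrised form of \eqref{eqn:contiguous relation used in derivation of gauss cf} is precisely $d_{i+1}$. There is no new analytic difficulty, since the correspondence and uniform convergence are handled exactly as for $\mathcal{G}_{1}^{(a,b,c)}(\omega)$, which is the $j=0$ instance of the odd case.
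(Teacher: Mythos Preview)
Your argument is correct: identifying the partial numerators of $\mathcal{G}_n^{(a,b,c)}$ with the shifted Gauss coefficients $d_{n+1},d_{n+2},\dots$, introducing the contiguous sequence $\{\mathcal{P}^{\ast}_i\}$, verifying the three-term relation in both parities, and then invoking \cite[Theorem~1, pp.~295--296]{Lisa-Waadeland-book-cf-with-application} is exactly the mechanism behind Proposition~\ref{prop:correspondence and convergence of g1}, and it extends uniformly to all $n$ as you do. Your sequence $\{\mathcal{P}^{\ast}_i\}$ is in fact the paper's $\{\mathcal{P}_i\}$ shifted by one index, so the difference relation you check is the same one already recorded in \eqref{eqn:difference relation for correspondence}.

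The paper's own proof takes a shorter, more structural route. Rather than rerunning the Proposition~\ref{prop:correspondence and convergence of g1} machinery, it simply observes that (i) for even index $n=2j$, the continued fraction \eqref{eqn:gap g fraction identity starting gk} is literally the Gauss continued fraction \eqref{eqn:Gauss continued fraction} with the parameter shift $a\mapsto a+j$, $b\mapsto b+j-1$, $c\mapsto c+2j-1$, which immediately gives $\mathcal{G}_{2j}^{(a,b,c)}$; and (ii) for odd index, $\mathcal{G}_{2j+1}^{(a,b,c)}$ is obtained from $\mathcal{G}_{2j-1}^{(a,b,c)}$ by the shift $a\mapsto a+1$, $b\mapsto b+1$, $c\mapsto c+2$, so the $j=0$ case (already established) propagates by induction. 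Your approach buys a single unified argument with no case distinction in the conclusion, at the cost of redoing the contiguous-relation bookkeeping; the paper's buys brevity by reducing everything to already-known continued fractions via parameter shifts, at the cost of splitting into even and odd cases.
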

\begin{proof}
The case $j=0$ has already been established in
Proposition $\ref{prop:correspondence and convergence of g1}$.

Comparing the continued fractions for
$\mathcal{G}_{2j+1}^{(a,b,c)}(\omega)$
and $\mathcal{G}_{2j-1}^{(a,b,c)}(\omega)$,
$j\geq1$,
it can be seen that
$\mathcal{G}_{2j+1}^{(a,b,c)}(\omega)$
can be obtained for
$\mathcal{G}_{2j-1}^{(a,b,c)}(\omega)$ $j\geq1$
by shifting
$a\mapsto a+1$, $b\mapsto b+1$ and $c\mapsto c+2$.

For $n=2j$, $j\geq1$,
we note that the continued fraction in right side of
\eqref{eqn:gap g fraction identity starting gk}
is nothing but the Gauss continued fraction
\cite[eqn. 89.9, p. 337]{Wall_book}
with the shifts $a\mapsto a+j$, $b\mapsto b+j-1$
and $c\mapsto c+2j-1$
in the parameters.
\end{proof}
Instead of starting with $k_n(1-k_{n+1})$, as the first partial numerator
term in the continued fraction
\eqref{eqn:gap g fraction identity starting gk},
a modification by inserting a new term changes the hypergeometric ratio given in Proposition~\ref{prop:continued fraction identity for gk}, thus leading to interesting consequences. We state this result as follows.
\begin{theorem}
\label{thm:gap g fraction identity with minimal parameters}
Let
\begin{align}
\label{eqn:k-fraction expansion for Fn(abc)}
\mathcal{F}_n^{(a,b,c)}(\omega)=
\dfrac{1}{1}
\begin{array}{cc}\\$-$\end{array}
\dfrac{k_n\omega}{1}
\begin{array}{cc}\\$-$\end{array}
\dfrac{(1-k_{n})k_{n+1}\omega}{1}
\begin{array}{cc}\\$-$\end{array}
\dfrac{(1-k_{n+1})k_{n+2}\omega}{1}
\begin{array}{cc}\\$-$\end{array}
\cdots .
\end{align}
Then,
\begin{align*}
\mathcal{F}_{2j+1}^{(a,b,c)}(\omega)
&=\dfrac{F(a+j+1,b+j;c+2j;\omega)}{F(a+j,b+j;c+2j;\omega)},
\quad j\geq0,\\
\mathcal{F}_{2j+2}^{(a,b,c)}(\omega)
&=\dfrac{F(a+j+1,b+j+1;c+2j+1;\omega)}{F(a+j+1,b+j;c+2j+1;\omega)},
\quad j\geq0.
\end{align*}
\end{theorem}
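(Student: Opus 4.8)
The plan is to reduce every case to the single Küstner continued fraction \eqref{eqn:Kustner continued fraction}, which we already know equals $\mathcal{F}_1^{(a,b,c)}(\omega)$. First I would record the explicit $k$-parameters attached to $(a,b,c)$: from $k_p=1-g_p$ and the formulas for $g_{2p}$, $g_{2p+1}$ one obtains $k_{2p}=(a+p)/(c+2p-1)$ and $k_{2p+1}=(b+p)/(c+2p)$. A direct substitution then confirms the base case: for $n=1$ the continued fraction \eqref{eqn:k-fraction expansion for Fn(abc)} has first partial numerator $k_1\omega=(b/c)\omega$, second $(1-k_1)k_2\omega=\tfrac{(c-b)(a+1)}{c(c+1)}\omega$, and so on, matching \eqref{eqn:Kustner continued fraction} term by term; hence $\mathcal{F}_1^{(a,b,c)}(\omega)=F(a+1,b;c;\omega)/F(a,b;c;\omega)$.

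The key observation is that $\mathcal{F}_n^{(a,b,c)}(\omega)$ is built entirely from the tail $(k_n,k_{n+1},k_{n+2},\dots)$ of the $k$-sequence, so two parameter triples that produce the same tail yield identical continued fractions. I would therefore look for parameter shifts that translate the $k$-sequence. A short computation with the two explicit formulas shows that the shift $(a,b,c)\mapsto(a+j,\,b+j,\,c+2j)$ sends the $k$-sequence to $\tilde k_m=k_{m+2j}$, while the shift $(a,b,c)\mapsto(b+j,\,a+j+1,\,c+2j+1)$ sends it to $\hat k_m=k_{m+2j+1}$. Each verification splits into the even- and odd-index subcases and is purely algebraic. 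Consequently $\mathcal{F}_{2j+1}^{(a,b,c)}(\omega)=\mathcal{F}_1^{(a+j,b+j,c+2j)}(\omega)$ and $\mathcal{F}_{2j+2}^{(a,b,c)}(\omega)=\mathcal{F}_1^{(b+j,a+j+1,c+2j+1)}(\omega)$.

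I would then feed these identifications into the base case. For the odd index this gives $\mathcal{F}_{2j+1}^{(a,b,c)}(\omega)=F(a+j+1,b+j;c+2j;\omega)/F(a+j,b+j;c+2j;\omega)$ at once. For the even index the base case yields $F(b+j+1,a+j+1;c+2j+1;\omega)/F(b+j,a+j+1;c+2j+1;\omega)$, and one symmetrises the first two arguments via $F(\alpha,\beta;\gamma;\omega)=F(\beta,\alpha;\gamma;\omega)$ to obtain the stated $F(a+j+1,b+j+1;c+2j+1;\omega)/F(a+j+1,b+j;c+2j+1;\omega)$. The correspondence and convergence underlying these identities are handled exactly as in Proposition~\ref{prop:correspondence and convergence of g1}.

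I expect the only delicate point to be the index bookkeeping in the two shift computations: one must track the even/odd parity of the indices carefully to see that the first shift advances the sequence by $2j$ while the second advances it by exactly $2j+1$ (the latter swapping the roles of $a$ and $b$), and remember to invoke the symmetry of $F$ only in the even case. Everything else reduces to routine substitution into the two formulas for $k_{2p}$ and $k_{2p+1}$.
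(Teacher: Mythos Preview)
Your argument is correct, but it follows a genuinely different route from the paper's proof. The paper does not reduce $\mathcal{F}_n^{(a,b,c)}$ to the base case $\mathcal{F}_1$ via parameter shifts; instead it passes through the auxiliary functions $\mathcal{G}_n^{(a,b,c)}$ of Proposition~\ref{prop:continued fraction identity for gk}, forming $\mathcal{E}_{n+1}^{(a,b,c)}(\omega)=1-\tfrac{1}{k_n}\bigl(1-1/\mathcal{G}_n^{(a,b,c)}(\omega)\bigr)$, invoking the contiguous relations \eqref{eqn:contiguous relation used in derivation of gauss cf} and \eqref{eqn:contiguous relation gauss to kustner} to identify $\mathcal{E}_{n+1}$ as $(1-\omega)$ times a hypergeometric ratio, and then applying Wall's transformation \cite[eqn.~75.3]{Wall_book} to remove the factor $(1-\omega)$ and obtain $\mathcal{F}_{n+1}^{(a,b,c)}$. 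Your approach is more economical: once the Küstner expansion \eqref{eqn:Kustner continued fraction} is in hand, the observation that the two shifts $(a,b,c)\mapsto(a+j,b+j,c+2j)$ and $(a,b,c)\mapsto(b+j,a+j+1,c+2j+1)$ translate the $k$-sequence by $2j$ and $2j+1$ respectively does all the work, with only the symmetry $F(\alpha,\beta;\gamma;\omega)=F(\beta,\alpha;\gamma;\omega)$ needed at the end. The paper's route, on the other hand, makes the connection between the families $\mathcal{G}_n$ and $\mathcal{F}_n$ explicit and exercises the contiguous-relation machinery that is used elsewhere in Section~\ref{sec:gap g fractions and structural relations}.
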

\begin{proof}
Denoting,
\begin{align*}
\mathcal{E}_{n+1}^{(a,b,c)}(\omega)=
1-\dfrac{1}{k_n}\left(1-\dfrac{1}{\mathcal{G}_n^{(a,b,c)}(\omega)}\right)=
1-\dfrac{(1-k_{n+1})\omega}{1-\dfrac{k_{n+1}(1-k_{n+2})\omega}{1-\cdots}},
\quad n\geq1,
\end{align*}
we find from \cite[eqn. 75.3, p. 281]{Wall_book},
\begin{align*}
\mathcal{F}_{n+1}^{(a,b,c)}(\omega)=
\dfrac{\mathcal{E}_{n+1}^{(a,b,c)}(\omega)}{1-z}=
\dfrac{1}{1-\dfrac{k_{n+1}\omega}{1-\dfrac{(1-k_{n+1})k_{n+2}\omega}{1-\cdots}}},
\quad n\geq1.
\end{align*}
Hence, we need to derive the functions
$\mathcal{E}_{n+1}^{(a,b,c)}(\omega)$. For $n=2j$, $j\geq1$,
using \eqref{eqn:contiguous relation used in derivation of gauss cf}
and $k_{2j}=(a+j)/(c+2j-1)$, we find that
\begin{align*}
\dfrac{1}{k_{2j}}\left[1-\dfrac{1}{\mathcal{G}_{2j}^{(a,b,c)}(\omega)}\right]=
\dfrac{c-b+j}{c+2j}\omega\dfrac{F(a+j+1,b+j;c+2j+1;\omega)}
{F(a+j,b+j;c+2j;\omega)}
\end{align*}
Shifting $a\mapsto a+j$, $b\mapsto b+j$ and $c\mapsto c+2j$ in
\eqref{eqn:contiguous relation gauss to kustner},
we find that
\begin{align*}
\mathcal{E}_{2j+1}^{(a,b,c)}(\omega)=(1-z)\dfrac{F(a+j+1,b+j;c+2j;\omega)}
{F(a+j,b+j;c+2j;\omega)}
\end{align*}
so that
\begin{align*}
\mathcal{F}_{2j+1}^{(a,b,c)}(\omega)=
\dfrac{F(a+j+1,b+j;c+2j;\omega)}{F(a+j;b+j;c+2j;\omega)}=
\dfrac{1}{1-\dfrac{k_{2j+1}\omega}
{1-\dfrac{(1-k_{2j+1})k_{2j+2}\omega}{1-\cdots}}},
\quad j\geq1.
\end{align*}
Repeating the above steps, we find that for $n=2j+1$, $j\geq0$ and
$k_{2j+1}=(b+j)/(c+2j)$, $j\geq0$,
\begin{align*}
\mathcal{E}_{2j+2}^{(a,b,c)}(\omega)=(1-z)\dfrac{F(a+j+1,b+j+1;c+2j+1;\omega)}
{F(a+j+1,b+j;c+2j+1;\omega)},
\quad j\geq0
\end{align*}
so that
\begin{align*}
\mathcal{F}_{2j+2}^{(a,b,c)}(\omega)=
\dfrac{F(a+j+1,b+j+1;c+2j+1;\omega)}{F(a+j+1;b+j;c+2j+1;\omega)}=
\dfrac{1}{1-\dfrac{k_{2j+2}\omega}{1-\dfrac{(1-k_{2j+2})k_{2j+3}\omega}{1-\cdots}}},
\quad j\geq0.
\end{align*}
\end{proof}
%
%
%\begin{remark}
%From \eqref{eqn:k-fraction expansion for Fn(abc)}
%it may be noted that the continued fraction
%\begin{align*}
%\dfrac{1}{k_n\omega}\left(1-\dfrac{1}{\mathcal{F}_{n}^{(a,b,c)}(\omega)}\right)
%\end{align*}
%can be viewed as a gap $g$-fraction with the initial $n$
%parameters $k_0, k_1,\cdots,k_{n-1}$ missing.
%\end{remark}
%%
%%
%%
%%
For particular values of $\mathcal{F}_n^{(a,b,c)}(\omega)$, further properties of the ratio of hypergeometric function can be discussed. One particular case and few ratios of hypergeometric functions are given in
Section~\ref{sec:a class of pick functions and schur functions}
with some properties. Before proving such specific case, we consider another type of perturbation in $g$-fraction in the next section.
\section{Perturbed Schur parameters}
\label{sec:Perturbed Schur parameters}
As mentioned in Section~\ref{sec:Introduction},
the case of a single parameter
$g_k$ being replaced by $g_{k}^{(\beta_k)}$ can be studied using the
Schur parameters. It is obvious that this is equivalent to studying the
perturbed sequence $\{\alpha_{j}^{(\beta_k)}\}_{j=0}^{\infty}$,
where
\begin{align}
\label{eqn:relation between schur paramters and perturbed ones}
\alpha_{j}^{(\beta_k)}=
\left\{
  \begin{array}{ll}
    \alpha_j, & \hbox{$j\neq k$;} \\
    \beta_k, & \hbox{$j=k$.}
  \end{array}
\right.
\end{align}
Hence, we start with a given Schur function and study the perturbed
Carath\'{e}odory function and its corresponding $g$-fraction.
The following theorem gives the structural relation between the Schur
function and the perturbed one. The proof follows the transfer matrix
approach, which has also been used earlier
in literature (see for example
\cite{Castillo-pert-szego-rec-2014-jmaa,
Castillo-copolynomials-2015-jmaa}).
\begin{theorem}
\label{thm:structural relations for perturbations}
Let $\mathrm{A}_k(z)$ and $B_k(z)$ be the $n^{th}$ partial numerators
and denominators of the Schur fraction associated with the sequence
$\{\alpha_k\}_{n=0}^{\infty}$. If $\mathrm{A}_{k}(z;k)$
and $\mathrm{B}_{k}(z;k)$ are the $n^{th}$ partial numerators
and denominators of the Schur fraction associated with the sequence
$\{\alpha_j^{(\beta_k)}\}_{j=0}^{\infty}$ as defined in
\eqref{eqn:relation between schur paramters and perturbed ones},
then the following structural relations hold for
$p\geq 2k$, $k\geq1$.
\begin{align}
\label{eqn:structural relations used in theorem}
z^{k-1}\prod_{j=0}^{k}(1-|\alpha_j|^2)
\left(
  \begin{array}{cc}
    \mathrm{A}_{2p+1}(z;k) & \mathrm{A}_{2p}(z;k) \\
    \mathrm{B}_{2p+1}(z;k) & \mathrm{B}_{2p}(z;k) \\
  \end{array}
\right)=
\mathfrak{T}(z;k)
\left(
  \begin{array}{cc}
    \mathrm{A}_{2p+1}(z) & \mathrm{A}_{2p}(z) \\
    \mathrm{B}_{2p+1}(z) & \mathrm{B}_{2p}(z) \\
  \end{array}
\right),
\end{align}
where the entries of the transfer matrix
$\mathfrak{T}(z;k)$ are given by
\begin{align*}
&\left(
  \begin{array}{cc}
   \mathfrak{T}_{(1,1)} & \mathfrak{T}_{(1,2)} \\
   \mathfrak{T}_{(2,1)} & \mathfrak{T}_{(2,2)} \\
  \end{array}
  \right)\\
=&\left(
   \begin{array}{cc}
    p_k(z,k)A_{2k-1}(z)+q_k^{\ast}(z,k)A_{2k-2}(z)  & q_k(z,k)A_{2k-1}(z)+p_k^{\ast}(z,k)A_{2k-2}(z) \\
    p_k(z,k)B_{2k-1}(z)+q_k^{\ast}(z,k)B_{2k-2}(z)  & q_k(z,k)B_{2k-1}(z)+p_k^{\ast}(z,k)B_{2k-2}(z) \\
   \end{array}
   \right),
\end{align*}
with
\begin{align*}
p_k(z,k)&=(\alpha_k-\beta_k)B_{2k-1}(z)+(1-\beta\bar{\alpha}_k)B_{2k-2}(z)\\
q_k(z;k)&=(\beta_k-\alpha_k)A_{2k-1}(z)-(1-\bar{\alpha}_k\beta_k)A_{2k-2}(z).
\end{align*}
\end{theorem}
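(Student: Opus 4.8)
The plan is to repackage the two Wallis recurrences \eqref{eqn:Wallis formular for even approximants-Schur fraction} and \eqref{eqn:Wallis formula for 2n+1, 2n-1, 2n-2} into a single transfer-matrix recursion, and then exploit the fact that replacing $\alpha_k$ by $\beta_k$ alters exactly one factor in the resulting product. Writing
\begin{align*}
\mathbf{M}_p=
\begin{pmatrix}
\mathrm{A}_{2p+1} & \mathrm{B}_{2p+1}\\
\mathrm{A}_{2p} & \mathrm{B}_{2p}
\end{pmatrix},
\qquad
T_p(\alpha)=
\begin{pmatrix}
z & \bar{\alpha}z\\
\alpha & 1
\end{pmatrix},
\end{align*}
the even relation in \eqref{eqn:Wallis formular for even approximants-Schur fraction} together with \eqref{eqn:Wallis formula for 2n+1, 2n-1, 2n-2} is precisely $\mathbf{M}_p=T_p(\alpha_p)\mathbf{M}_{p-1}$, which is \eqref{eqn:matrix relation for Schur approximants}. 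Note that the matrices appearing in the statement are the transposes $\mathbf{M}_p^{\mathsf T}$ and $\mathbf{M}_p(k)^{\mathsf T}$. Since the initial data give $\mathbf{M}_0=T_0(\alpha_0)$, iterating produces $\mathbf{M}_p=T_p(\alpha_p)\cdots T_0(\alpha_0)$.

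As $\alpha_j^{(\beta_k)}=\alpha_j$ for all $j\neq k$ and $k\geq1$ keeps $\alpha_0$ untouched, for $p\geq k$ the unperturbed and perturbed products share the common left factor $P=T_p(\alpha_p)\cdots T_{k+1}(\alpha_{k+1})$ and the common right factor $\mathbf{M}_{k-1}$, the $k$-th factor being the only one that changes. Hence $\mathbf{M}_p=P\,T_k(\alpha_k)\,\mathbf{M}_{k-1}$ and $\mathbf{M}_p(k)=P\,T_k(\beta_k)\,\mathbf{M}_{k-1}$; eliminating $P$ and transposing gives the compact identity
\begin{align*}
\mathbf{M}_p(k)^{\mathsf T}
=\mathbf{M}_{k-1}^{\mathsf T}\,T_k(\beta_k)^{\mathsf T}\big[T_k(\alpha_k)^{\mathsf T}\big]^{-1}\big(\mathbf{M}_{k-1}^{\mathsf T}\big)^{-1}\mathbf{M}_p^{\mathsf T},
\end{align*}
already of the asserted shape, with a rational multiplier acting on the left of $\mathbf{M}_p^{\mathsf T}$. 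As a sanity check, when $\beta_k=\alpha_k$ this multiplier collapses to the identity.

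It remains to clear denominators and match entries. A direct computation gives
\begin{align*}
T_k(\beta_k)^{\mathsf T}\big[T_k(\alpha_k)^{\mathsf T}\big]^{-1}
=\frac{1}{1-|\alpha_k|^2}
\begin{pmatrix}
1-\beta_k\bar{\alpha}_k & \beta_k-\alpha_k\\
\bar{\beta}_k-\bar{\alpha}_k & 1-\bar{\beta}_k\alpha_k
\end{pmatrix}=:\frac{D}{1-|\alpha_k|^2},
\end{align*}
while $\det T_j=z(1-|\alpha_j|^2)$ yields $\det\mathbf{M}_{k-1}=z^{k}\prod_{j=0}^{k-1}(1-|\alpha_j|^2)$, so $(\mathbf{M}_{k-1}^{\mathsf T})^{-1}=(\det\mathbf{M}_{k-1})^{-1}\mathrm{adj}(\mathbf{M}_{k-1}^{\mathsf T})$. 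Multiplying through by the scalar $z^{k}\prod_{j=0}^{k}(1-|\alpha_j|^2)$, which is exactly $(1-|\alpha_k|^2)\det\mathbf{M}_{k-1}$ and so clears every denominator, reduces the left multiplier to $\mathbf{M}_{k-1}^{\mathsf T}\big[D\,\mathrm{adj}(\mathbf{M}_{k-1}^{\mathsf T})\big]$. A routine $2\times2$ expansion identifies the top row of $D\,\mathrm{adj}(\mathbf{M}_{k-1}^{\mathsf T})$ with $(p_k,q_k)$ at once, and left-multiplication by $\mathbf{M}_{k-1}^{\mathsf T}$ (whose rows are $(\mathrm{A}_{2k-1},\mathrm{A}_{2k-2})$ and $(\mathrm{B}_{2k-1},\mathrm{B}_{2k-2})$) then produces the first column of the stated $\mathfrak{T}(z;k)$.

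The delicate step, which I expect to be the main obstacle, is identifying the bottom row of $D\,\mathrm{adj}(\mathbf{M}_{k-1}^{\mathsf T})$, namely
\begin{align*}
\big((\bar{\beta}_k-\bar{\alpha}_k)\mathrm{B}_{2k-2}-(1-\bar{\beta}_k\alpha_k)\mathrm{B}_{2k-1},\ \ (1-\bar{\beta}_k\alpha_k)\mathrm{A}_{2k-1}-(\bar{\beta}_k-\bar{\alpha}_k)\mathrm{A}_{2k-2}\big),
\end{align*}
with the reversed polynomials $q_k^{\ast}$ and $p_k^{\ast}$, since these are not manifestly conjugate-reverses of $p_k,q_k$. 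For this I would invoke the reciprocal relations \eqref{eqn:reciprocal relations between An and Bn} in the form $\mathrm{A}_{2k-1}=z\mathrm{B}_{2k-2}^{\ast}$, $\mathrm{A}_{2k-2}=\mathrm{B}_{2k-1}^{\ast}$, $\mathrm{B}_{2k-1}=z\mathrm{A}_{2k-2}^{\ast}$, $\mathrm{B}_{2k-2}=\mathrm{A}_{2k-1}^{\ast}$, together with the degrees $\deg\mathrm{A}_{2k-1}=\deg\mathrm{B}_{2k-1}=k$ and $\deg\mathrm{A}_{2k-2}=\deg\mathrm{B}_{2k-2}=k-1$. Applying the defining rule $\mathrm{P}_n^{\ast}(z)=z^n\overline{\mathrm{P}_n(1/\bar{z})}$ to $p_k$ and $q_k$ and simplifying with these substitutions shows that $p_k^{\ast}$ and $q_k^{\ast}$ coincide exactly with the two bottom-row entries above. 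Once this reciprocity-and-degree bookkeeping is carried out, the multiplier equals $\mathbf{M}_{k-1}^{\mathsf T}\left(\begin{smallmatrix}p_k & q_k\\ q_k^{\ast} & p_k^{\ast}\end{smallmatrix}\right)$, which is precisely $\mathfrak{T}(z;k)$, and the proof is complete.
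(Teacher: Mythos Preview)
Your approach is essentially the same as the paper's: both rewrite \eqref{eqn:matrix relation for Schur approximants} as a product of transfer matrices, observe that the perturbation changes exactly the factor $T_k$, eliminate the common tail, transpose, and then use the reciprocal relations \eqref{eqn:reciprocal relations between An and Bn} to recognise the entries as $p_k,q_k,p_k^{\ast},q_k^{\ast}$. Your elimination of the common left block $P$ is in fact a bit more direct than the paper, which first introduces ``associated polynomials of order $k+1$'' via $\Omega_{p-(k+1)}^{(k+1)}=\Omega_p\Omega_k^{-1}\Omega_0$ and only afterwards collapses everything to $[T_k(\beta_k)\Omega_{k-1}]^{\mathsf T}\Omega_k^{-\mathsf T}$; since $\Omega_k=T_k(\alpha_k)\Omega_{k-1}$, the two computations are identical.

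One point worth flagging: your denominator count gives the clearing scalar $z^{k}\prod_{j=0}^{k}(1-|\alpha_j|^2)$, whereas the stated theorem has $z^{k-1}$. Your count is the correct one (indeed $\det\mathbf{M}_{k-1}=z^{k}\prod_{j=0}^{k-1}(1-|\alpha_j|^2)$ and the extra $(1-|\alpha_k|^2)$ comes from $T_k(\beta_k)^{\mathsf T}T_k(\alpha_k)^{-\mathsf T}$ exactly as you wrote), so the exponent $k-1$ in the statement appears to be a typo; you should note this rather than silently reproduce it.
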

\begin{proof}
Let
\begin{align*}
\Omega_p(z;\alpha)=
\left(
  \begin{array}{cc}
    \mathrm{A}_{2p+1}(z) & \mathrm{B}_{2p+1}(z) \\
    \mathrm{A}_{2p}(z) & \mathrm{B}_{2p}(z) \\
  \end{array}
\right)
\quad\mbox{and}\quad
\Omega_p(z;\alpha;k)=
\left(
  \begin{array}{cc}
    \mathrm{A}_{2p+1}(z;k) & \mathrm{B}_{2p+1}(z;k) \\
    \mathrm{A}_{2p}(z;k) & \mathrm{B}_{2p}(z;k) \\
  \end{array}
\right).
\end{align*}
Then the matrix relation
\eqref{eqn:matrix relation for Schur approximants}
can be written as
\begin{align}
\label{eqn:transfer matrix before perturbation}
\Omega_p(z;\alpha)&=T_p(\alpha_p)\cdot\Omega_{p-1}(z;\alpha)\nonumber\\
                  &=T_p(\alpha_{p})\cdot T_{p-1}(\alpha_{p-1})\cdot\cdots\cdot
                   T_{1}(\alpha_{1})\cdot\Omega_0(z;\alpha),
\quad p\geq1,
\end{align}
with
\begin{align*}
T_p(\alpha_p)=
\left(
  \begin{array}{cc}
    z & \bar{\alpha}_pz \\
    \alpha_p & 1 \\
  \end{array}
\right)
\quad\mbox{and}\quad
\Omega_0(z;\alpha)=
%\left(
%  \begin{array}{cc}
%    A_1(z) & B_1(z) \\
%    A_0(z) & B_0(z) \\
%  \end{array}
%\right)=
\left(
  \begin{array}{cc}
    z & \bar{\alpha}_0z \\
    \alpha_0 & 1 \\
  \end{array}
\right)=
T_0(\alpha_0).
\end{align*}
From \eqref{eqn:relation between schur paramters and perturbed ones}, it is clear that
\begin{align}
\label{eqn:transfer matrix after perturbation}
\Omega_{p}(z;\alpha;k)=
\Omega_0(z;\alpha) T_{k}(\beta_{k})\prod_{\underset{j\neq k}{j=1}}^{p}T_j(\alpha_j).
\end{align}
Defining the associated polynomials of order $k+1$ as
\begin{align*}
\Omega_{p-(k+1)}^{(k+1)}(z;\alpha)=
T_p(\alpha_p)T_{p-1}(\alpha_{p-1})\cdots T_{k+1}(\alpha_{k+1})\Omega_0(z;\alpha),
\end{align*}
we have
\begin{align}
\label{eqn:polynomials of order k+1}
T_p(\alpha)T_{p-1}(\alpha_{p-1})\cdots T_{k+1}(\alpha_{k+1})=
\Omega_{p-(k+1)}^{(k+1)}(z;\alpha)[\Omega_0(z;\alpha)]^{-1},
\end{align}
where $[\Omega_0(z;\alpha)]^{-1}$ denotes the matrix inverse of
$\Omega_0(z;\alpha)$.
Now, using
\eqref{eqn:transfer matrix before perturbation}
and
\eqref{eqn:polynomials of order k+1}
in
\eqref{eqn:transfer matrix after perturbation},
we get
\begin{align}
\label{eqn:transfer matrix after perturbaton with polynomials of order k+1}
\Omega_p(z;k;\alpha)=\Omega_{p-(k+1)}^{(k+1)}(z;\alpha)\cdot[\Omega_{0}(z;\alpha)]^{-1}
                     \cdot T_k(\beta_k)\cdot\Omega_{k-1}(z;\alpha).
\end{align}
Again from \eqref{eqn:transfer matrix before perturbation},
\begin{align*}
\Omega_p(z;\alpha)&=\underbrace{T_p(\alpha_p)\cdots T_{k+1}(\alpha_{k+1})}\cdot
               \underbrace{T_k(\alpha_k)\cdots T_{1}(\alpha_{1})\Omega_0(z;\alpha)}\\
             &=\Omega_{p-(k+1)}^{(k+1)}(z;\alpha)\Omega_0^{-1}(z;\alpha)\cdot\Omega_k(z;\alpha),
\end{align*}
which means
\begin{align}
\label{eqn:transfer matrix for polynomials of order k+1}
\Omega_{p-(k+1)}^{(k+1)}(z;\alpha)=
\Omega_p(z;\alpha)[\Omega_{k}(z;\alpha)]^{-1}\Omega_0(z;\alpha).
\end{align}
Using
\eqref{eqn:transfer matrix for polynomials of order k+1}
in
\eqref{eqn:transfer matrix after perturbaton with polynomials of order k+1}, we get
\begin{align*}
\Omega_p(z;k;\alpha)=\Omega_p(z,\alpha)[\Omega_{k}(z,\alpha)]^{-1}
                     \Omega_0(z;\alpha)\cdot
                     [\Omega_0(z;\alpha)]^{-1}\cdot T_k(\beta_k)\cdot\Omega_{k-1}(z,\alpha),
\end{align*}
which implies
\begin{align*}
[\Omega_p(z;k;\alpha)]^{T}=[T_k(\beta_k)\Omega_{k-1}(z,\alpha)]^{T}
                         \cdot
                         [\Omega_{k}(z,\alpha)]^{-T}\cdot
                         [\Omega_p(z,\alpha)]^{T}.
 \end{align*}
where $[\Omega_p(z,\alpha)]^{T}$ denotes the matrix transpose of
$\Omega_p(z,\alpha)$.
After a brief calculation, and using the relations
\eqref{eqn:reciprocal relations between An and Bn},
it can be proved that the product
$[T_k(\beta_k)\Omega_{k-1}(z,\alpha)]^{T}\cdot
\Omega_{k}^{-T}(z,\alpha)$
precisely gives the transfer matrix
$\mathfrak{T}(z;k)$
leading to
\eqref{eqn:structural relations used in theorem}.
\end{proof}
As an important consequence of Theorem
\eqref{thm:structural relations for perturbations},
we have,
\begin{align*}
z^{k-1}\prod_{j=0}^{k}(1-|\alpha_j|^2)
\left(
  \begin{array}{c}
    \mathrm{A}_{2p}(z;k) \\
    \mathrm{B}_{2p}(z;k) \\
  \end{array}
\right)=
\mathfrak{T}_k(z;k)
\left(
  \begin{array}{c}
    \mathrm{A}_{2p}(z) \\
    \mathrm{B}_{2p}(z) \\
  \end{array}
\right),
\end{align*}
which implies,
\begin{align*}
\dfrac{\mathrm{A}_{2p}(z;k)}{\mathrm{B}_{2p}(z;k)}=
\dfrac{\mathfrak{T}_{(1,2)}+\mathfrak{T}_{(1,1)}\left(\mathrm{A}_{2p}(z)/\mathrm{B}_{2p}(z)\right)}
      {\mathfrak{T}_{(2,2)}+\mathfrak{T}_{(2,1)}\left(\mathrm{A}_{2p}(z)/\mathrm{B}_{2p}(z)\right)}.
\end{align*}
This gives the perturbed Schur function as,
\begin{align}
\label{eqn:perturbed Schur function}
f^{(\beta_k)}(z;k)=
\dfrac{\mathfrak{T}_{(1,2)}+\mathfrak{T}_{(1,1)}f(z)}
      {\mathfrak{T}_{(2,2)}+\mathfrak{T}_{(2,1)}f(z)}.
\end{align}
We next consider a non-constant Schur function of the  form $f(z)=cz+d$ where
$|c|+|d|\leq1$ and have a perturbation $\alpha_1\mapsto\beta_1$. Then
\begin{align*}
p_1(z,1)&=(\alpha_1-\beta_1)\bar{\alpha}_0z+(1-\bar{\alpha}_1\beta_1);
\quad
p_1^{\ast}(z,1)=(1-\alpha_1\bar{\beta}_1)z+(\bar{\alpha}_1-\bar{\beta}_1)\alpha_0;\\
q_1(z,1)&=(\beta_1-\alpha_1)z-(1-\bar{\alpha}_1\beta_1)\alpha_0;
\quad
q_1^{\ast}(z,1)=(\bar{\beta}_1-\bar{\alpha}_1)-(1-\alpha_1\bar{\beta}_1)\bar{\alpha}_0z.
\end{align*}
The matrix entries are
\begin{align*}
\tau_{(1,1)}&=(\alpha_1-\beta_1)z^2+[(1-\beta_1\bar{\alpha}_1)-(1-\alpha_1\bar{\beta}_1)|\alpha_0|^2]z+
  \alpha_0(\bar{\beta}_1-\bar{\alpha}_1),\\
\tau_{(1,2)}&=(\beta_1-\alpha_1)z^2+[(1-\alpha_1\bar{\beta}_1)\alpha_0-(1-\bar{\alpha}_1\beta_1)\alpha_0]z+
  (\bar{\alpha}_1-\bar{\beta}_1)\alpha_0^{2},\\
\tau_{(2,1)}&=(\alpha_1-\beta_1)(\bar{\alpha}_0)^2z^2+[(1-\beta_1\bar{\alpha}_1)\bar{\alpha}_0-(1-\alpha_1\bar{\beta}_1)\bar{\alpha}_0]z+
  (\bar{\beta}_1-\bar{\alpha}_1),\\
\tau_{(2,2)}&=(\beta_1-\alpha_1)\bar{\alpha}_0z^2+[(1-\alpha_1\bar{\beta}_1)-(1-\bar{\alpha}_1\beta_1)|\alpha_0|^2]z+
   (\bar{\alpha}_1-\bar{\beta}_1)\alpha_0.
\end{align*}
The transformed Schur function is a rational function given by
\begin{align*}
f^{(\beta_1)}(z,1)=
\dfrac{Az^3+Bz^2+Cz+D}{\hat{A}z^3+\hat{B}z^2+\hat{C}z+\hat{D}},
\end{align*}
where
\begin{align*}
A&=(\alpha_1-\beta_1)\bar{\alpha}_0c,
\quad
B=(\beta_1-\alpha_1)(1-\bar{\alpha}_0d)+
    c(1-\beta_1\bar{\alpha}_1)-
    c|\alpha_0|^2(1-\alpha_1\bar{\beta}_1),\\
C&=(1-\alpha_1\bar{\beta}_1)(\alpha_0-d|\alpha_0|^2)+
   (1-\bar{\alpha}_1\beta_1)(d-\alpha_0)+
   c\alpha_0(\bar{\beta}_1-\bar{\alpha}_1),\\
D&=(\bar{\beta}_1-\bar{\alpha}_1)(d-\alpha_0)\alpha_0,
\end{align*}
and
\begin{align*}
\hat{A}&=(\alpha_1-\beta_1)(\bar{\alpha}_0)^2c,
\quad
\hat{B}=(\beta_1-\alpha_1)(1-\bar{\alpha}_0d)\bar{\alpha}_0+
         c(1-\beta_1\bar{\alpha}_1)\bar{\alpha}_0-
         c(1-\alpha_1\bar{\beta}_1)\bar{\alpha}_0,\\
\hat{C}&=(1-\alpha_1\bar{\beta}_1)(1-d\bar{\alpha}_0)+
         (1-\beta_1\bar{\alpha}_1)(d\bar{\alpha}_0-|\alpha_0|^2)+
         c(\bar{\beta}_1-\bar{\alpha}_1),\\
\hat{D}&=(\bar{\beta}_1-\bar{\alpha}_1)(d-\alpha_0).
\end{align*}
This leads to the following easy consequence of Theorem
\ref{thm:structural relations for perturbations}.
\begin{corollary}
Let $f(z)=cz+\alpha_0$, where $|c|\leq 1-|\alpha_0|$
denote the class of Schur functions.
Then with the perturbation $\alpha_1\mapsto\beta_1$,
the resulting Schur function is the rational function given by
\begin{align}
\label{eqn:transformed Schur function with alpha1 replaced by beta1 used in proposition}
f^{(\beta_1)}(z;1)=\dfrac{Az^2+Bz+C}{\hat{A}z^2+\hat{B}z+\hat{C}},
\quad A\neq0,\quad \hat{A}\neq0.
\end{align}
\end{corollary}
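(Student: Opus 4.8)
The plan is to specialize, to the normalization $d=\alpha_0$, the cubic-over-cubic expression for $f^{(\beta_1)}(z,1)$ that was obtained in the paragraph preceding the statement for a general affine Schur function $f(z)=cz+d$ with $|c|+|d|\leq1$. Since $\alpha_0=f(0)$ is exactly the zeroth Schur parameter of $f$, setting $d=\alpha_0$ is the natural choice, and I expect the degree of the rational function to drop from $3$ to $2$ owing to a common factor that appears.

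First I would substitute $d=\alpha_0$ into the explicit list of coefficients computed just above. The key observation is that the constant terms of both the numerator and the denominator vanish: since
\begin{align*}
D=(\bar{\beta}_1-\bar{\alpha}_1)(d-\alpha_0)\alpha_0,
\qquad
\hat{D}=(\bar{\beta}_1-\bar{\alpha}_1)(d-\alpha_0),
\end{align*}
each carries the factor $(d-\alpha_0)$, so that $D=\hat{D}=0$ when $d=\alpha_0$. Consequently both the numerator cubic $Az^3+Bz^2+Cz+D$ and the denominator cubic $\hat{A}z^3+\hat{B}z^2+\hat{C}z+\hat{D}$ are divisible by $z$.

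Second, I would factor out the common $z$ and cancel it, so that
\begin{align*}
f^{(\beta_1)}(z;1)=
\dfrac{z(Az^2+Bz+C)}{z(\hat{A}z^2+\hat{B}z+\hat{C})}=
\dfrac{Az^2+Bz+C}{\hat{A}z^2+\hat{B}z+\hat{C}},
\end{align*}
which is the asserted form, the coefficients $A,B,C,\hat{A},\hat{B},\hat{C}$ being read off as the $z^3,z^2,z^1$ coefficients of the original cubics (now the $z^2,z^1,z^0$ coefficients after division by $z$). The coefficients $B,C,\hat{B},\hat{C}$ themselves simplify further under $d=\alpha_0$ using $\bar{\alpha}_0 d=|\alpha_0|^2$, but these simplifications are not needed for the claim.

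Finally I would verify the two nondegeneracy assertions. From the list, $A=(\alpha_1-\beta_1)\bar{\alpha}_0 c$ and $\hat{A}=(\alpha_1-\beta_1)(\bar{\alpha}_0)^2 c$, which are nonzero precisely when the perturbation is nontrivial ($\alpha_1\neq\beta_1$), the Schur function is nonconstant ($c\neq0$), and $\alpha_0\neq0$; under these standing nondegeneracy hypotheses one has $A\neq0$ and $\hat{A}\neq0$, so the reduced rational function has numerator and denominator of degree exactly $2$. The computation is entirely elementary, and I do not anticipate a genuine obstacle; the only point requiring care is the cancellation step, where one must confirm that the shared factor is exactly $z$ (not a higher power) and that the leading terms survive, which is precisely what the conditions $A\neq0$ and $\hat{A}\neq0$ record.
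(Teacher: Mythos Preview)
Your proposal is correct and is exactly the intended argument: the paper presents the corollary as an ``easy consequence'' of the preceding cubic-over-cubic computation for $f(z)=cz+d$, and your specialization $d=\alpha_0$ together with the observation that $D$ and $\hat{D}$ carry the factor $(d-\alpha_0)$ is precisely what makes the common factor $z$ appear and drops the degree to two. Your remark on the nondegeneracy conditions ($\alpha_1\neq\beta_1$, $c\neq0$, $\alpha_0\neq0$) needed for $A,\hat{A}\neq0$ is also accurate and in fact sharper than what the paper states explicitly.
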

We now consider an example illustrating the above discussion.
\begin{example}
\label{exm:perturbed schur fractions}
Consider the sequence of Schur parameters
$\{\alpha_n\}_{n=0}^{\infty}$
given by
$\alpha_0=1/2$ and $\alpha_n=2/(2n+1)$, $n\geq1$.
Then, as in \cite[Example 6.3]{JNT-Survey-Schur-PC-Szego},
the Schur function is $f(z)=(1+z)/2$
% $\hat{f}(z)=2z/(z+1)$
with
\begin{align*}
\mathrm{A}_{2m}(z)  &=\dfrac{1}{2}+\dfrac{2z^{m+2}-2(m+1)z^2+2mz}
                                         {(2m+1)(z-1)^2},\\
\mathrm{B}_{2m}(z)  &=1+\dfrac{z^{m+2}+z^{m+1}-(2m+1)z^2+(2m-1)z}
                              {(2m+1)(z-1)^2},\\
\mathrm{A}_{2m+1}(z)&=\dfrac{z+z^2-(2m+3)z^{m+2}+(2m+1)z^{m+3}}
                            {(2m+1)(z-1)^2},\\
\mathrm{B}_{2m+1}(z)&=\dfrac{z^{m+1}}{2}+2\dfrac{z-(m+1)z^{m+1}+mz^{m+2}}
                                                {(2m+1)(z-1)^2}.
\end{align*}
We study the perturbation $\alpha_1\mapsto\beta_1=1/2$.
For the transfer matrix $\mathfrak{T}(z;k)$,
the following polynomials are required.
\begin{align*}
p_1(z)&=\dfrac{z}{12}+\dfrac{2}{3};
\quad
p_1^{\ast}(z)=\dfrac{2}{3}z+\dfrac{1}{12};\\
q_1(z)&=-\dfrac{z}{6}-\dfrac{1}{3};
\quad
q_1^{\ast}(z)=-\dfrac{z}{3}-\dfrac{1}{6}.
\end{align*}
The entries of $\mathfrak{T}(z;k)$ are
\begin{align*}
\mathfrak{T}_{(1,1)}&=\dfrac{z^2}{12}+\dfrac{z}{2}-\dfrac{1}{12};
\quad
\mathfrak{T}_{(1,2)}=-\dfrac{z^2}{6}+\dfrac{1}{12};\\
\mathfrak{T}_{(2,1)}&=\dfrac{z^2}{24}-\dfrac{1}{6};
\quad
\mathfrak{T}_{(2,2)}=-\dfrac{z^2}{12}+\dfrac{z}{2}+\dfrac{1}{12}.
\end{align*}
Hence, the transformed Schur function using
\eqref{eqn:transformed Schur function with alpha1 replaced by beta1 used in proposition}
is
\begin{align}
\label{eqn:Transformed Schur function in example}
f^{(1/2)}(z;1)=2\dfrac{z^2-3z+5}{z^2-3z+20}.
\end{align}
Observe that $f(z)$ and $f^{(1/2)}(z;1)$ are analytic in $\mathbb{D}$ with $f(0)=f^{(1/2)}(0;1)$ and
\begin{align*}
\omega(z)=f^{-1}(f^{(1/2)}(z;1))=
\dfrac{3z(z-3)}{z^2-3z+20},
\end{align*}
where $\omega(z)$ is analytic in $\mathbb{D}$ with $|\omega(z)|<1$.
Further, by Schwarz lemma $|\omega(z)|<|z|$ for $0<|z|<1$ unless $\omega(z)$
is a pure rotation. In such a case the range of $f^{(1/2)}(z;1)$ is contained in the range of $f(z)$. The function $f^{(1/2)}(z;1)$ is said to be subordinate to $f(z)$ and written as $f^{(1/2)}(z;1)\prec f(z)$
for $z\in\mathbb{D}$
\cite[Chapter 6]{Duren-book}.

We plot the ranges of both the Schur functions below.
\pagebreak
\begin{center}
\begin{figure}[h!]
\includegraphics[width=5cm, height=5cm]{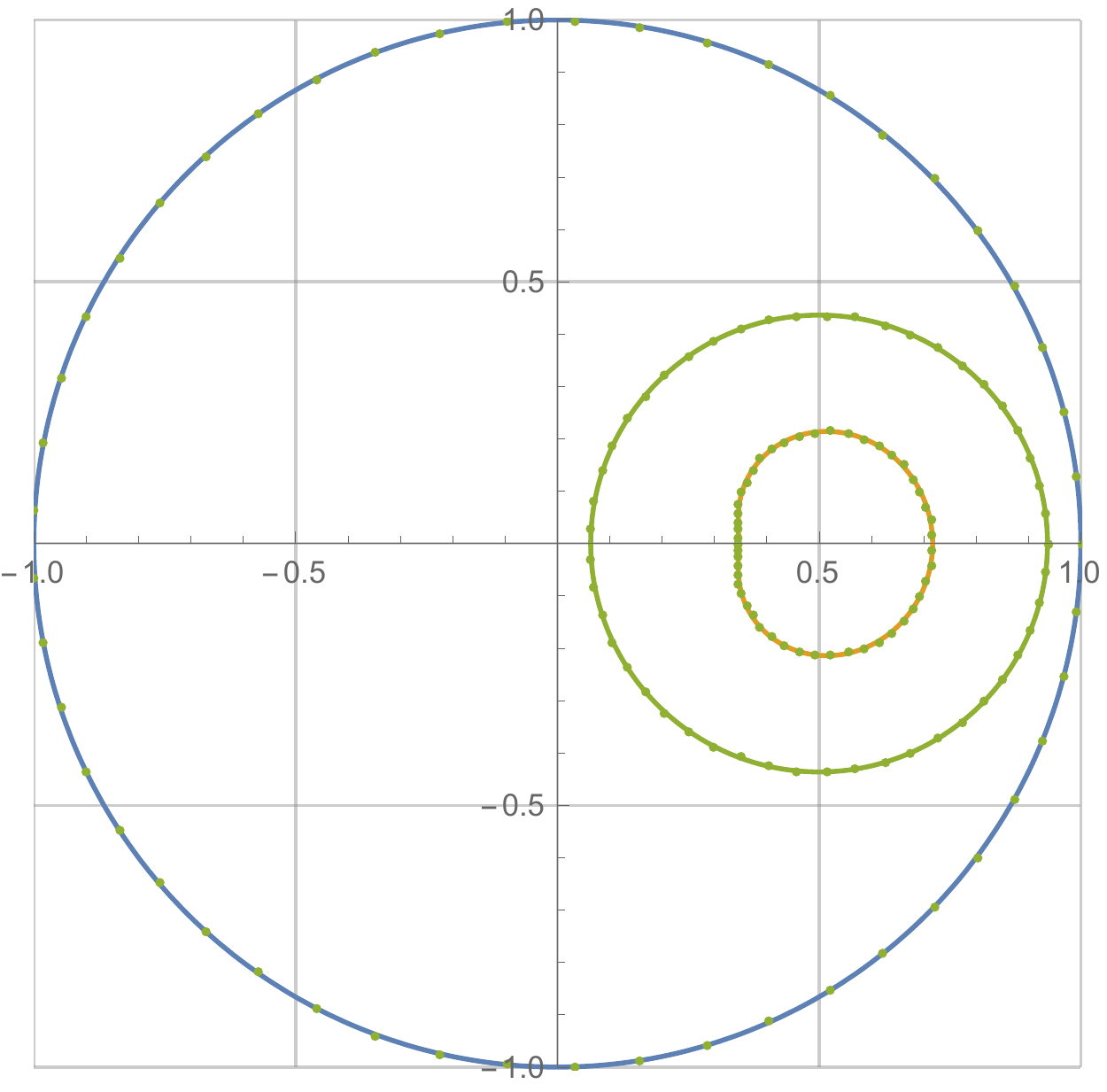}
\caption{Subordinate Schur functions}
\label{fig:subordinate-schur-functions}
\end{figure}
\end{center}
In Figure \ref{fig:subordinate-schur-functions}, the outermost circle
is the unit circle while the middle one is the image of
$|z|=0.9$ under $f(z)$ which is again a
circle with center at 1/2. The innermost figure is the image of
$|z|=0.9$ under $f^{(1/2)}(z;1)$.
\end{example}
\subsection{The change in Carath\'{e}odory function}
Let the Carath\'{e}odory function associated with the
perturbed Schur function $f^{(\beta_k)}(z;k)$ be
denoted by $\mathcal{C}^{(\beta_k)}(z;k)$.
Then, using
\eqref{eqn:perturbed Schur function},
we can write
\begin{align*}
\mathcal{C}^{(\beta_k)}(z;k)&=\dfrac{1+zf^{(\beta_k)}(z;k)}{1-zf^{(\beta_k)}(z;k)}\\
                            &=\dfrac{(\mathfrak{T}_{2,2}+z\mathfrak{T}_{1,2})+(\mathfrak{T}_{2,1}+z\mathfrak{T}_{1,1})f(z)}
                                    {(\mathfrak{T}_{2,2}-z\mathfrak{T}_{1,2})+(\mathfrak{T}_{2,1}-z\mathfrak{T}_{1,1})f(z)}.
\end{align*}
Further, using the relation
\eqref{eqn:relation between Schur and Carat functions},
we have
\begin{align}
\label{eqn: perturbed Carat function}
\mathcal{C}^{(\beta_k)}(z;k)=
\dfrac{\mathcal{Y}^{-}(z)+\mathcal{Y}^{+}(z)\mathcal{C}(z)}
      {\mathcal{W}^{-}(z)+\mathcal{W}^{+}(z)\mathcal{C}(z)},
\end{align}
where
\begin{align*}
\mathcal{Y}^{\pm}(z)&=
z(\mathfrak{T}_{(2,2)}+z\mathfrak{T}_{(1,2)})
\pm (\mathfrak{T}_{(2,1)}+z\mathfrak{T}_{(1,1)})\\
\mathcal{W}^{\pm}(z)&=
z(\mathfrak{T}_{(2,2)}-z\mathfrak{T}_{(1,2)})
\pm (\mathfrak{T}_{(2,1)}-z\mathfrak{T}_{(1,1)}).
\end{align*}
As an illustration, for the Schur function $f(z)=(1+z)/2$,
it is easy to verify that
\begin{align*}
\mathcal{C}(z)=\dfrac{2+z+z^2}{2-z-z^2}
\quad\mbox{and}\quad
\mathcal{C}^{(1/2)}(z;1)=\dfrac{2z^3-5z^2+7z+20}{-2z^3+7z^2-13z+20}.
\end{align*}
%
%%%%%%%%%%
We plot these Carath\'{e}odory functions below.
\pagebreak
\begin{center}
\begin{figure}[h!]
%\label{fig:perturbed mapping properties of carat functions}
  \begin{subfigure}[b]{0.4\textwidth}
    \includegraphics[width=\textwidth]{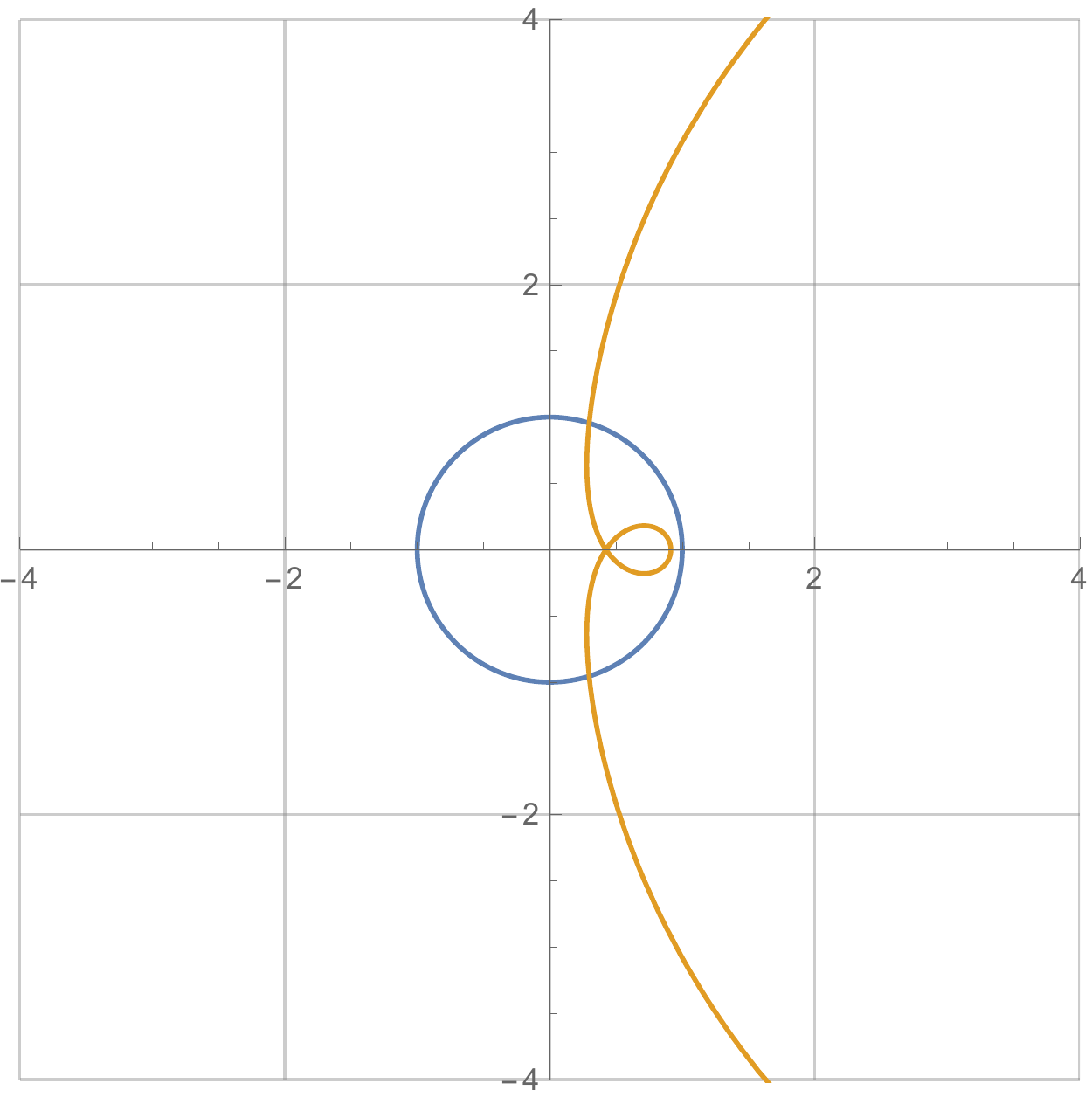}
    \caption{The function $\mathcal{C}(z)$.}
    \label{fig:original carat}
  \end{subfigure}
  \hfill
  \begin{subfigure}[b]{0.4\textwidth}
    \includegraphics[width=\textwidth]{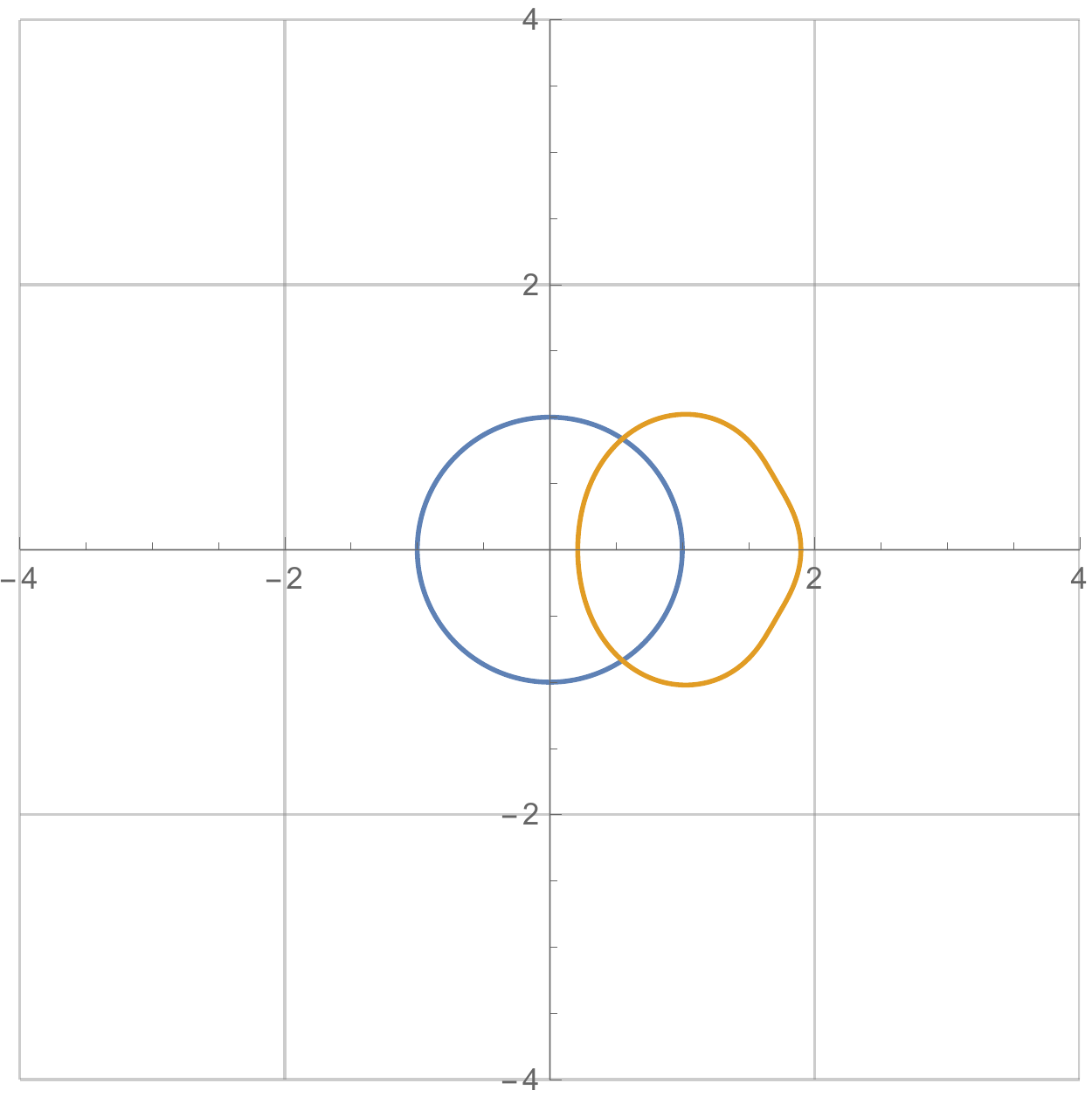}
    \caption{The function $\mathcal{C}^{(1/2)}(z;1)$.}
    \label{fig:perturbed carat}
  \end{subfigure}
  \caption{Perturbed mapping properties of Carath\'{e}odory functions.}
\end{figure}
\end{center}
In Figures  \ref{fig:original carat} and \ref{fig:perturbed carat},
the ranges of both the original and perturbed Carath\'{e}odory functions
are plotted for $|z|=0.9$.
Interestingly, the range of $\mathcal{C}(z)$ is unbounded
(Figure \ref{fig:original carat}) which is clear as $z=1$ is a pole of
$\mathcal{C}(z)$.
However $\mathcal{C}^{1/2}(z;1)$ has simple poles at 5/2 and
$(1\pm i\sqrt{15})/2$
and hence
with the use of perturbation we are able to make the
range bounded (Figure \ref{fig:perturbed carat}).
%
%

%%%%%%%%%%

As shown in
\cite{Wall-cf-and-bdd-analytic-function-1944-BAMS},
the sequence
$\{\gamma_j\}_{j=0}^{\infty}$
satisfying the recurrence relation
\begin{align}
\label{eqn:recurrence relation for paramters in carat function}
\gamma_{p+1}=\dfrac{\gamma_p-\bar{\alpha}_p}{1-\alpha_p\gamma_p},
\quad p\geq0.
\end{align}
where $\gamma_0=1$ and $\alpha_j's$ are the Schur parameters
plays an important role in the $g$-fraction expansion for a special
class of Carath\'{e}odory functions.
Let $\{\gamma_j^{(\beta_k)}\}$ correspond to the perturbed
Carath\'{e}odory function $\mathcal{C}^{(\beta_k)}(z;k)$.
Since only $\alpha_j$ is perturbed,
it is clear that $\gamma_j$ remains unchanged for $j=0,1,\cdots,k$.
The first change,
$\gamma_{k+1}$ to $\gamma_{k+1}^{(\beta_k)}$,
occurs when $\alpha_k$ is replaced by $\beta_k$.
Consequently, $\gamma_{k+j}$, $j\geq2$,
change to
$\gamma_{k+j}^{(\beta_k)}$, $j\geq2$, respectively.
We now show that $\gamma_{j}$
can be expressed as a bilinear transformation
of $\gamma_j^{(\beta_k)}$ for $j\geq k+1$.
\begin{theorem}
Let $\{\gamma_j\}_{j=0}^{\infty}$ be the sequence corresponding to
$\{\alpha_j\}_{j=0}^{\infty}$ and $\{\gamma_j^{(\beta_k)}\}_{j=0}^{\infty}$
that to $\{\alpha_j^{(\beta_k)}\}_{j=0}^{\infty}$. Then,
\begin{align}
\label{eqn:blt for gamma and perturbed gamma used in theorem}
\gamma_{k+j}^{(\beta_k)}=\dfrac{\bar{a}_{k+j}\gamma_{k+j}-b_{k+j}}
                                  {-\bar{b}_{k+j}\gamma_{k+j}+a_{k+j}},
\quad j\geq1,
\end{align}
where
\begin{enumerate}[(i)]
\item $a_{k+1}=\dfrac{1-\bar{\alpha}_k\beta_k}{1-|\beta_k|^2}
       \quad\mbox{and}\quad
       b_{k+1}=\dfrac{\bar{\beta_k}-\bar{\alpha}_k}{1-|\beta_k|^2}$, (j=1).

\item For $j\geq2$,
      \begin{align*}
      \left(
                    \begin{array}{c}
                      a_{k+j} \\
                      b_{k+j} \\
                    \end{array}
                  \right)=
                  \dfrac{1}{1-|\alpha_{k+j-1}|^2}
                  \left(
                    \begin{array}{cc}
                      1 & \alpha_{k+j-1} \\
                     \bar{\alpha}_{k+j-1} & 1\\
                    \end{array}
                  \right)
                  \left(
                    \begin{array}{cc}
                      a_{k+j-1}-\bar{\alpha}_{k+j-1}\bar{b}_{k+j-1} \\
                      b_{k+j-1}-\bar{\alpha}_{k+j-1}\bar{a}_{k+j-1}  \\
                    \end{array}
                  \right).
      \end{align*}
\end{enumerate}
\end{theorem}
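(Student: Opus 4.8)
The plan is to argue by induction on $j$, exploiting the fact that each step of the recurrence \eqref{eqn:recurrence relation for paramters in carat function} is a Möbius (bilinear) transformation. Writing $M_{\alpha}(w)=\dfrac{w-\bar{\alpha}}{1-\alpha w}$, the recurrence reads $\gamma_{p+1}=M_{\alpha_p}(\gamma_p)$, and likewise $\gamma_{p+1}^{(\beta_k)}=M_{\alpha_p^{(\beta_k)}}(\gamma_p^{(\beta_k)})$. Since the perturbation \eqref{eqn:relation between schur paramters and perturbed ones} alters only $\alpha_k$, and $\gamma_p$ depends solely on $\alpha_0,\dots,\alpha_{p-1}$, one has $\gamma_p=\gamma_p^{(\beta_k)}$ for $0\le p\le k$; in particular $\gamma_k=\gamma_k^{(\beta_k)}$. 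Thus the first genuine discrepancy appears at index $k+1$, which is exactly the base case of the induction.

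For the base case $j=1$, I would first invert $\gamma_{k+1}=M_{\alpha_k}(\gamma_k)$ to write $\gamma_k=\dfrac{\gamma_{k+1}+\bar{\alpha}_k}{1+\alpha_k\gamma_{k+1}}$, then substitute this into $\gamma_{k+1}^{(\beta_k)}=M_{\beta_k}(\gamma_k)=\dfrac{\gamma_k-\bar{\beta}_k}{1-\beta_k\gamma_k}$. After clearing the common factor $1+\alpha_k\gamma_{k+1}$ and collecting terms, this yields
\begin{align*}
\gamma_{k+1}^{(\beta_k)}=\dfrac{(1-\alpha_k\bar{\beta}_k)\gamma_{k+1}+(\bar{\alpha}_k-\bar{\beta}_k)}
{(\alpha_k-\beta_k)\gamma_{k+1}+(1-\bar{\alpha}_k\beta_k)}.
\end{align*}
Dividing numerator and denominator by $1-|\beta_k|^2$ and comparing with the target \eqref{eqn:blt for gamma and perturbed gamma used in theorem} identifies $a_{k+1}=\dfrac{1-\bar{\alpha}_k\beta_k}{1-|\beta_k|^2}$ and $b_{k+1}=\dfrac{\bar{\beta}_k-\bar{\alpha}_k}{1-|\beta_k|^2}$, which is statement (i).

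For the inductive step $j\ge2$, the crucial observation is that the index $k+j-1$ differs from $k$, so $\alpha_{k+j-1}^{(\beta_k)}=\alpha_{k+j-1}=:\alpha$ and both $\gamma_{k+j}$ and $\gamma_{k+j}^{(\beta_k)}$ are obtained from their predecessors by the \emph{same} map $M_{\alpha}$. Assuming inductively that $\gamma_{k+j-1}^{(\beta_k)}=N(\gamma_{k+j-1})$ with $N(w)=\dfrac{\bar{a}w-b}{-\bar{b}w+a}$ and $(a,b)=(a_{k+j-1},b_{k+j-1})$, I obtain $\gamma_{k+j}^{(\beta_k)}=(M_{\alpha}\circ N\circ M_{\alpha}^{-1})(\gamma_{k+j})$. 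Representing each Möbius map by a $2\times2$ matrix, this composition corresponds to the product
\begin{align*}
\begin{pmatrix}1&-\bar{\alpha}\\-\alpha&1\end{pmatrix}
\begin{pmatrix}\bar{a}&-b\\-\bar{b}&a\end{pmatrix}
\begin{pmatrix}1&\bar{\alpha}\\\alpha&1\end{pmatrix}.
\end{align*}
Each factor has the form $\begin{pmatrix}p&q\\\bar{q}&\bar{p}\end{pmatrix}$, and this $SU(1,1)$-type structure is preserved under multiplication, so the product is again of this shape; hence the transformed map automatically carries the required symmetric form $\dfrac{\bar{a}_{k+j}w-b_{k+j}}{-\bar{b}_{k+j}w+a_{k+j}}$. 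Reading off the bottom-right and top-right entries of the product gives $(1-|\alpha|^2)a_{k+j}$ and $-(1-|\alpha|^2)b_{k+j}$ respectively, and factoring the common $(1-|\alpha|^2)^{-1}$ rearranges these two expressions precisely into the matrix recurrence of statement (ii).

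I expect the main obstacle to be bookkeeping rather than anything conceptual: one must fix a single convention for associating matrices to Möbius maps and track the placement of complex conjugates carefully, so that the entries of the triple product line up with $a_{k+j}$ and $b_{k+j}$ and not with their conjugates. The structural remark that all three factors lie in the $\begin{pmatrix}p&q\\\bar{q}&\bar{p}\end{pmatrix}$ family is what keeps the argument clean, since it guarantees the symmetry of the resulting coefficients without a separate verification.
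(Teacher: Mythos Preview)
Your proposal is correct and follows the same inductive skeleton as the paper, but the execution of the inductive step is organised differently. The paper verifies the formula for $j=1$ and $j=2$ by direct substitution: it plugs $\gamma_{k+1}^{(\beta_k)}=(\gamma_k-\bar{\beta}_k)/(1-\beta_k\gamma_k)$ into the inverse of \eqref{eqn:blt for gamma and perturbed gamma used in theorem} and simplifies to $\gamma_{k+1}$, then repeats an analogous computation at level $j=2$ before invoking ``a simple induction on $j$'' for the rest. Your version instead recognises the inductive step as the conjugation $M_{\alpha}\circ N\circ M_{\alpha}^{-1}$ in the M\"obius group and carries it out once and for all via the $2\times 2$ matrix product; the observation that each factor has the $\begin{pmatrix}p&q\\\bar{q}&\bar{p}\end{pmatrix}$ shape, closed under multiplication, explains \emph{a priori} why the result again has the symmetric form $(\bar{a}_{k+j}w-b_{k+j})/(-\bar{b}_{k+j}w+a_{k+j})$ and simultaneously delivers the recurrence~(ii) from the bottom-right and top-right entries. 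This buys you a uniform treatment of all $j\ge2$ without a separate $j=2$ verification, and makes transparent the reason the coefficients $(a_{k+j},b_{k+j})$ are not unique (cf.\ the paper's subsequent remark): any scalar multiple of the matrix represents the same M\"obius map.
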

\begin{proof}
Consider first the expression
\begin{align*}
\dfrac{a_{k+1}\gamma_{k+1}^{(\beta_k)}+b_{k+1}}
      {\bar{b}_{k+1}\gamma_{k+1}^{(\beta_k)}+\bar{a}_{k+1}}.
\end{align*}
Substituting
$\gamma_{k+1}^{(\beta_k)}=(\gamma_k-\bar{\beta}_k)/(1-\beta_k\gamma_k)$
and the given values of $a_{k+1}$ and $b_{k+1}$,
it simplifies to
\begin{align*}
 &\dfrac{(1-\bar{\alpha}_{k}\beta_k)(\gamma_k-\bar{\beta}_k)+(\bar{\beta}_k-\bar{\alpha}_k)(1-\alpha_k\gamma_k)}
       {(\beta_k-\alpha_k)(\gamma_k-\bar{\beta}_k)+(1-\alpha_k\bar{\gamma}_k)(1-\beta_k\gamma_k)}
=\dfrac{\gamma_k(1-|\beta_k|^2)-\bar{\alpha}_k(1-|\beta_k|^2)}{(1-|\beta_k|^2)-\alpha_k\gamma_k(1-|\beta|^2)}
=\gamma_{k+1}.
\end{align*}
Since $|a_{k+1}|^2-|b_{k+1}|^2=(1-|\alpha_k|^2)/(1-|\beta_k|^2)\neq0$,
\eqref{eqn:blt for gamma and perturbed gamma used in theorem}
is proved for $j=1$.

Next, let
\begin{align*}
&\dfrac{a_{k+2}\gamma_{k+2}^{(\beta_k)}+b_{k+2}}
                {\bar{b}_{k+2}\gamma_{k+2}^{(\beta_k)}+\bar{a}_{k+2}}
=\dfrac{(a_{k+2}-\alpha_{k+1}b_{k+2})\gamma_{k+1}^{(\beta_k)}+(b_{k+2}-\bar{\alpha}_{k+1}a_{k+2})}
       {(\bar{b}_{k+2}-\alpha_{k+1}\bar{a}_{k+2})\gamma_{k+1}^{(\beta_k)}+(\bar{a}_{k+2}-\bar{\alpha}_{k+1}\bar{b}_{k+2})}
=\dfrac{N(\gamma_k)}{D(\gamma_k)}.
\end{align*}
Substituting first the given values of
$a_{k+2}$ and $b_{k+2}$,
the numerator becomes
\begin{align*}
N(\gamma_k)=(1-|\alpha_{k+1}|^2)[(a_{k+1}-\bar{\alpha}_{k+1}\bar{b}_{k+1})\gamma_{k+1}^{(\beta_k)}+
                                 (b_{k+1}-\bar{\alpha}_{k+1}\bar{a}_{k+1})],
\end{align*}
and then using,
$\gamma_{k+1}^{(\beta_k)}=(\bar{a}_{k+1}-b_{k+1})/
(-\bar{b}_{k+1}\gamma_{k+1}+a_{k+1})$,
\begin{align*}
N(\gamma_k)=(1-|\alpha_{k+1}|^2)(|\alpha_{k+1}|^2-
|b_{k+1}|^2)(\gamma_{k+1}-\bar{\alpha}_{k+1}).
\end{align*}
With similar calculations, we obtain
\begin{align*}
D(\gamma_k)=(1-|\alpha_{k+1}|^2)(|\alpha_{k+1}|^2-
|b_{k+1}|^2)(1-\alpha_{k+1}\gamma_{k+1}).
\end{align*}
This means
\begin{align*}
\dfrac{N(\gamma_k)}{D(\gamma_k)}=
\dfrac{\gamma_{k+1}-\bar{\alpha}_{k+1}}
{1-\alpha_{k+1}\gamma_{k+1}}=
\gamma_{k+2},
\end{align*}
where
$|a_{k+2}|^2-|b_{k+2}|^2=|a_{k+1}|^2-|b_{k+1}|^2\neq0$,
thus proving
\eqref{eqn:blt for gamma and perturbed gamma used in theorem}
for $j=2$.
The remaining part of the proof
is follows by a simple induction on $j$.
\end{proof}
\begin{remark}
With the condition $|\alpha_p|<1$, it is clear that
\eqref{eqn:recurrence relation for paramters in carat function}
gives analytic self-maps of the unit disk.
Similar to the changes in mapping properties obtained as
a consequence of perturbation,
it is expected that
\eqref{eqn:blt for gamma and perturbed gamma used in theorem}
may lead to interesting results in fractal geometry and complex dynamics.
\end{remark}
\begin{remark}
Since $\gamma_{j}$ and $\gamma_j^{(\beta_k)}$ are related by a bilinear
transformation, it is clear that the expressions for $a_{k+j}$ and
$b_{k+j}$, $j\geq1$, are not unique.
\end{remark}
It is known that if $\mathcal{C}(z)$ is real for real $z$, then
the $\alpha_p's$ are real and
$\gamma_p=1$, $p=0,1,\cdots$.
Further, it is clear from
\eqref{eqn:blt for gamma and perturbed gamma used in theorem}
that $\gamma_j^{(\beta_k)}=1$ whenever $\gamma_j=1$.
In this case, the following $g$-fraction is obtained
for $k\geq0$.
\begin{align*}
\dfrac{1-z}{1+z}\mathcal{C}^{(\beta_k)}(z)=
\dfrac{1}{1}
\begin{array}{cc}\\$-$\end{array}
\dfrac{g_1\omega}{1}
\begin{array}{cc}\\$-$\end{array}
\dfrac{(1-g_1)g_2\omega}{1}
\begin{array}{cc}\\$-$\end{array}
\cdots
\dfrac{(1-g_k)g_{k+1}^{(\beta_k)}\omega}{1}
\begin{array}{cc}\\$-$\end{array}
\dfrac{(1-g_{k+1}^{(\beta_k)})g_{k+2}\omega}{1}
\begin{array}{cc}\\$-$\end{array}
\cdots
\end{align*}
where $g_j=(1-\alpha_{j-1})/2$, $j=1,\cdots,k,k+2\cdots$,
$g_{k+1}^{(\beta_k)}=(1-\beta_k)/2$
and $\omega=-4z/(1-z)^2$.
\section{A class of Pick functions and Schur functions}
\label{sec:a class of pick functions and schur functions}
Let the Hausdorff sequence $\{\nu_j\}_{j\geq0}$
with $\nu_0=1$ be given so that there
exists a bounded non-decreasing measure $\nu$ on [0,1]
satisfying
\begin{align*}
\nu_j=\int_{0}^{1}\sigma^jd\nu(\sigma),
\quad j\geq0.
\end{align*}
By \cite[Theorem 69.2]{Wall_book}, the existence of
$d\nu(\sigma)$ is equivalent to the power series
\begin{align*}
F(\omega)=\sum_{j\geq0}\nu_j{\omega}^j=
\int_{0}^{1}\dfrac{1}{1-\sigma \omega}d\nu(\sigma)
\end{align*}
having a continued fraction
expansion of the form
\begin{align*}
\int_{0}^{1}\dfrac{1}{1-\sigma \omega}d\nu(\sigma)=
\dfrac{\nu_0}{1}
\begin{array}{cc}\\$-$\end{array}
\dfrac{(1-g_0)g_1\omega}{1}
\begin{array}{cc}\\$-$\end{array}
\dfrac{(1-g_1)g_2\omega}{1}
\begin{array}{cc}\\$-$\end{array}
\dfrac{(1-g_2)g_3\omega}{1}
\begin{array}{cc}\\$-$\end{array}
\cdots
\end{align*}
where $\nu_0\geq0$ and $0\leq g_p\leq 1$, $p\geq0$.
Such functions $F(\omega)$ are analytic in the slit domain
$\mathbb{C}\setminus[1,\infty)$
and belong to the class of Pick functions.
We note that the Pick functions are analytic in the
upper half plane and have a positive imaginary part
\cite{Donoghue-interpolation-pick-function-rocky-mountain}.

In the next result, we characterize some members of the class of Pick
functions using the gap $g$-fraction $\mathcal{F}_2^{(a,b,c)}(\omega)$.
The proof is similar to that of
\cite[Theorem 1.5]{Kustner-g-fractions-2002-CMFT} and
follows from \cite[Lemma 3.1]{Kustner-g-fractions-2002-CMFT}, given earlier
as \cite[Corollary 2.1]{Merkes-AMS-1959-typically-real}.
\begin{theorem}
\label{thm:characterising members of pick functions}
If $a, b, c\in\mathbb{R}$ with $-1<a\leq c$ and $0\leq b\leq c$,
then the functions
\begin{align*}
\omega &\mapsto \dfrac{F(a+1,b+1;c+1;\omega)}{F(a+1,b;c+1;\omega)}
\quad ; \quad
\omega \mapsto \dfrac{\omega F(a+1,b+1;c+1;\omega)}{F(a+1,b;c+1;\omega)}\\
\omega &\mapsto \dfrac{F(a+2,b+1;c+2;\omega)}{F(a+1,b;c+1;\omega)}
\quad ; \quad
\omega \mapsto \dfrac{zF(a+2,b+1;c+2;\omega)}{F(a+1,b;c+1;\omega)}\\
\omega &\mapsto \dfrac{F(a+2,b+1;c+2;\omega)}{F(a+1,b+1;c+1;\omega)}
\quad ; \quad
\omega \mapsto \dfrac{\omega F(a+2,b+1;c+2;\omega)}{F(a+1,b+1;c+1;\omega)}
\end{align*}
are analytic in $\mathbb{C}\setminus[1,\infty)$ and each
function map both the open unit disk $\mathbb{D}$ and the
half plane $\{\omega\in\mathbb{C}: \mathrm{Re}\,\omega<1\}$ univalently
onto domains that are convex in the direction of the
imaginary axis.
%
%Further, if $b>0$ the result holds true for the
%following functions
%%
%\begin{align*}
%z\mapsto \dfrac{zF'(a+1,b;c+1;z)}{F(a+1,b;c+1;z)}
%\quad\mbox{and}\quad
%z\mapsto \dfrac{F'(a+1,b;c+1;z)}{F(a+1,b;c+1;z)}
%\end{align*}
%
\end{theorem}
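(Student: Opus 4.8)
The plan is to realise each of the six maps as a $g$-fraction of the type $\mathcal{F}_n^{(a,b,c)}$ or $\mathcal{G}_n^{(a,b,c)}$ from Theorem~\ref{thm:gap g fraction identity with minimal parameters} and Proposition~\ref{prop:continued fraction identity for gk}, or as $\omega$ times such a function, and then to transfer the geometry through the Hausdorff representation. Writing $N_1=F(a+1,b;c+1;\omega)$, $N_2=F(a+1,b+1;c+1;\omega)$ and $N_3=F(a+2,b+1;c+2;\omega)$, the first map is $N_2/N_1=\mathcal{F}_2^{(a,b,c)}(\omega)$, the case $j=0$ of the $\mathcal{F}_{2j+2}$ formula in Theorem~\ref{thm:gap g fraction identity with minimal parameters}, while the fifth map is $N_3/N_2=\mathcal{G}_1^{(a+1,b+1,c+1)}(\omega)$ by Proposition~\ref{prop:correspondence and convergence of g1} with each parameter raised by one. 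The remaining maps are $N_3/N_1=(N_3/N_2)(N_2/N_1)$ and the three functions obtained from $N_2/N_1$, $N_3/N_1$, $N_3/N_2$ by multiplication with $\omega$.

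First I would record that under $-1<a\le c$ and $0\le b\le c$ every $g$-parameter appearing in these continued fractions lies in $[0,1]$. For $\mathcal{F}_2^{(a,b,c)}$ this is the assertion that $k_{2p}=(a+p)/(c+2p-1)$ and $k_{2p+1}=(b+p)/(c+2p)$ lie in $[0,1]$ for $p\ge1$, which follows from $a+p>0$, $b+p\ge0$ together with $a\le c$ and $b\le c$; for $\mathcal{G}_1^{(a+1,b+1,c+1)}$ the shifted constants satisfy the same, indeed stronger, inequalities, so the check is identical. Hence each of these $g$-fractions converges on $\mathbb{C}\setminus[1,\infty)$, as in Proposition~\ref{prop:correspondence and convergence of g1}(ii), to a function with the representation $\int_0^1 d\phi(t)/(1-\omega t)$ for some non-decreasing $\phi$ of total mass one; in particular it is analytic on $\mathbb{C}\setminus[1,\infty)$, and therefore on $\{\mathrm{Re}\,\omega<1\}\subset\mathbb{C}\setminus[1,\infty)$.

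Once the representation is available, the geometric statement is exactly the content of \cite[Corollary~2.1]{Merkes-AMS-1959-typically-real}, equivalently \cite[Lemma~3.1]{Kustner-g-fractions-2002-CMFT}: a function $\int_0^1 d\phi(t)/(1-\omega t)$ with $\phi$ non-decreasing, as well as its product with $\omega$, maps each of $\mathbb{D}$ and $\{\mathrm{Re}\,\omega<1\}$ univalently onto a domain convex in the direction of the imaginary axis. Applied to $N_2/N_1$ and to $N_3/N_2$, this disposes of maps one, two, five and six simultaneously.

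The hard part will be maps three and four, namely $N_3/N_1$ and $\omega N_3/N_1$, because $N_3/N_1$ is not itself one of the continued fractions $\mathcal{F}_n$ or $\mathcal{G}_n$: its numerator and denominator differ simultaneously in all three parameters, whereas every member of those two families differs in at most two. I would therefore derive for $N_3/N_1$ its own Gauss-type $g$-fraction expansion, obtained by feeding the pair $(N_1,N_3)$ through the contiguous relations in \eqref{eqn:contiguous relation used in derivation of gauss cf} and \eqref{eqn:contiguous relation gauss to kustner}, and then verify exactly as above that its $g$-parameters again lie in $[0,1]$ under the stated hypotheses. Producing this expansion with genuinely positive $g$-parameters, rather than treating $N_3/N_1$ merely as a product of two Pick functions (a product that need not retain the integral representation required by the lemma), is the real obstacle; once it is in place, the Hausdorff representation and the cited corollary finish maps three and four, and the proof reduces otherwise to bookkeeping of parameter shifts and a direct appeal to \cite[Corollary~2.1]{Merkes-AMS-1959-typically-real}.
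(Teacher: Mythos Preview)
Your treatment of maps one, two, five and six is essentially the paper's: realise $N_2/N_1=\mathcal{F}_2^{(a,b,c)}$ and $N_3/N_2$ as $g$-fractions, check the parameters lie in $[0,1]$, pass to the Hausdorff representation, and invoke \cite[Corollary~2.1]{Merkes-AMS-1959-typically-real}.

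For maps three and four you correctly locate the difficulty but leave the construction of a $g$-fraction for $N_3/N_1$ as an unresolved obstacle. The paper does not attempt this at all. Instead it uses the contiguous relation (that of \eqref{eqn:contiguos relation in schur function and gap g fraction relation} with $a$ and $b$ interchanged and then shifted)
\[
F(a+1,b+1;c+1;\omega)-F(a+1,b;c+1;\omega)=\frac{a+1}{c+1}\,\omega\,F(a+2,b+1;c+2;\omega),
\]
that is, $N_2-N_1=k_2\,\omega\,N_3$ with $k_2=(a+1)/(c+1)>0$, to write
\[
\frac{\omega N_3}{N_1}=\frac{1}{k_2}\Bigl(\frac{N_2}{N_1}-1\Bigr)
=\frac{1}{k_2}\int_0^1\frac{\sigma\omega}{1-\sigma\omega}\,d\nu_0(\sigma)
=\int_0^1\frac{\omega}{1-\sigma\omega}\,d\nu_1(\sigma),
\]
where $d\nu_1(\sigma)=k_2^{-1}\sigma\,d\nu_0(\sigma)$; the normalisation $\nu_1(1)-\nu_1(0)=1$ holds because $\int_0^1\sigma\,d\nu_0$ equals the first Taylor coefficient $k_2$ of $N_2/N_1$. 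One more iteration of the same moment shift produces a measure $\nu_2$ giving the analogous representation for $N_3/N_1$ itself. The Merkes--K\"ustner lemma then applies directly, and no continued fraction for $N_3/N_1$ is ever needed. Your proposed route is not wrong in principle---indeed the representation just derived guarantees a posteriori that such a $g$-fraction exists---but extracting it explicitly via contiguous relations and then verifying positivity of its parameters is substantially more work than the paper's two-line measure manipulation.
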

We would like to note here that by a domain convex in the direction
of imaginary axis, we mean that every line parallel to the imaginary
axis has either connected or empty intersection with the
corresponding domain
\cite{Duren-book},
(see also
\cite{Swami-mapping-prop-BHF-2014-JCA, Kustner-g-fractions-2002-CMFT}.
).
\begin{proof}
[Proof of Theorem \ref{thm:characterising members of pick functions}]
With the given restrictions on $a$, $b$ and $c$, $\mathcal{F}_{2}^{(a,b,c)}(\omega)$
has a $g$-fraction expansion and hence by \cite[Theorem 69.2]{Wall_book}, there
exists a non-decreasing function $\nu_0:[0,1]\mapsto [0,1]$ with a total increase of 1 and
\begin{align*}
\dfrac{F(a+1,b+1;c+1;\omega)}{F(a+1,b;c+1;\omega)}=
\int_{0}^{1}\dfrac{1}{1-\sigma \omega}d\nu_0(\sigma),
\quad
\omega\in\mathbb{C}\setminus[1,\infty),
\end{align*}
which implies
\begin{align*}
\dfrac{\omega F(a+1,b+1;c+1;\omega)}{F(a+1,b;c+1;\omega)}=
\int_{0}^{1}\dfrac{\omega}{1-\sigma \omega}d\nu_0(\sigma),
\quad
\omega\in\mathbb{C}\setminus[1,\infty).
\end{align*}
Now, if we define
\begin{align*}
\nu_1(\sigma)=\dfrac{1}{k_2}\int_{0}^{\sigma}\rho d\nu_0(\rho),
\end{align*}
where $k_2=(a+1)/(c+1)>0$, it can be easily seen that
$\nu_1:[0,1]\mapsto [0,1]$ is again a non-decreasing
map with $\nu_1(1)-\nu_1(0)=1$.
Further, using the contiguous relation
\begin{align}
\label{eqn:contiguos relation in schur function and gap g fraction relation}
F(a+1,b;c;\omega)-F(a,b;c;\omega)=\dfrac{b}{c}\omega F(a+1,b+1;c+1;\omega),
\end{align}
we obtain
\begin{align*}
\dfrac{\omega F(a+2,b+1;c+2;\omega)}{F(a+1,b;c+1;\omega)}=
\int_{0}^{1}\dfrac{\omega}{1-\sigma \omega}d\nu_1(\sigma),
\quad \omega\in\mathbb{C}\setminus[1,\infty),
\end{align*}
and hence
\begin{align*}
\dfrac{F(a+1,b+1;c+1;\omega)}{F(a+1,b;c+1;\omega)}=
1+k_2
\int_{0}^{1}\dfrac{\omega}{1-\sigma \omega}d\nu_1(\sigma),
\quad \omega\in\mathbb{C}\setminus[1,\infty).
\end{align*}
Further, noting that the coefficient of $\omega$ in
$F(a+2,b+1;c+2;\omega)/F(a+1,b;c+1;\omega)$ is
$[(b+1)(c-a)]/[(c+1)(c+2)]=k_3+(1-k_3)k_2$,
we define
\begin{align*}
\nu_2(\sigma)=\dfrac{1}{k_3+k_2(1-k_3)}
\int_{0}^{\sigma}\rho d\nu_1(\rho),
\end{align*}
and find that
\begin{align*}
\dfrac{F(a+2,b+1;c+2;\omega)}{F(a+1,b;c+1;\omega)}=
1+[k_3+k_2(1-k_3)]
\int_{0}^{1}\dfrac{\omega}{1-\sigma \omega}d\nu_2(\sigma).
\end{align*}
Finally from Gauss continued fraction
\eqref{eqn:Gauss continued fraction},
we conclude that
$F(a+2,b+1;c+2;\omega)/F(a+1,b+1;c+1;\omega)$ has a $g$-fraction
expansion and so there exists a map $\nu_3:[0,1]\mapsto[0,1]$
which is non-decreasing, $\nu_3(1)-\nu_3(0)=1$ and
\begin{align*}
\dfrac{\omega F(a+2,b+1;c+2;\omega)}{F(a+1;b+1;c+1;\omega)}=
\int_{0}^{1}\dfrac{\omega}{1-\sigma \omega}d\nu_{3}(\sigma),
\quad \omega\in\mathbb{C}\setminus [1,\infty).
\end{align*}
Defining for $a<c$
\begin{align*}
\nu_4(\sigma)=\dfrac{1}{(1-k_2)k_3}
\int_{0}^{\sigma}\rho d\nu_3(\rho),
\end{align*}
so that $(1-k_2)k_3>0$, and using the fact that the coefficient
of $\omega$ in $F(a+2,b+1;c+2;\omega)/F(a+1,b+1;c+1;\omega)$ is $(1-k_2)k_3$,
we obtain
\begin{align*}
\dfrac{F(a+2,b+1;c+2;\omega)}{F(a+1,b+1;c+1;\omega)}=
1+[(1-k_2)k_3]
\int_{0}^{1}\dfrac{\omega}{1-\sigma \omega}d\nu_4(\sigma).
\end{align*}
Thus, with $\nu_j$, $j=0,1,2,3,4$, satisfying
the conditions of
\cite[Lemma 3.1]{Kustner-g-fractions-2002-CMFT},
\cite[Corollary 2.1]{Merkes-AMS-1959-typically-real},
the proof of the theorem is completed.
%
%The assertion about the last two ratios follows from the relation
%%
%\begin{align*}
%\dfrac{d}{dz}F(a,b;c;z)=F'(a,b;c;z)=\dfrac{ab}{c}F(a+1,b+1;c+1;z)
%\end{align*}
%%
%with the restriction $b>0$.
%
%
\end{proof}
\begin{remark}
Ratios of Gaussian hypergeometric functions having mapping properties described in Theorem $\ref{thm:characterising members of pick functions}$
are also found in \cite[Theorem 1.5]{Kustner-g-fractions-2002-CMFT}
but for the ranges $-1\leq a\leq c$ and $0<b\leq c$. Hence for the common range $-1<a\leq c$ and $0<b\leq c$, two different ratios of hypergeometric functions belonging to the class of Pick functions can be obtained leading to the expectation of finding more such ratios for every possible range.

\end{remark}

It may be noted that the ratio of Gaussian hypergeometric functions in \eqref{eqn:Kustner continued fraction}
denoted here as $\mathcal{F}(z)$
has the mapping properties given in Theorem
$\ref{thm:characterising members of pick functions}$,
which is proved in \cite[Theorem 1.5]{Kustner-g-fractions-2002-CMFT}.
We now consider its $g$-fraction expansion with the parameter $k_2$ missing.
Using the contiguous relation
\eqref{eqn:contiguos relation in schur function and gap g fraction relation}
and the notations used in Theorems
$\ref{thm:structural relation for gk missing}$
and
$\ref{thm:gap g fraction identity with minimal parameters}$,
it is clear that
$\mathcal{F}_3^{(a,b,c)}(\omega)=F(a+2,b+1;c+2;\omega)/F(a+1,b+1;c+2;\omega)$
and
\begin{align*}
\mathcal{H}_3(\omega)=
1-\dfrac{1}{\mathcal{F}_3^{(a,b,c)}(w)}=
\dfrac{b+1}{c+2}\omega\dfrac{F(a+2,b+2;c+3;\omega)}
{F(a+2,b+1;c+2;\omega)}.
\end{align*}
Then
\begin{align*}
h(2;\omega)=(1-k_1)\mathcal{H}_3(\omega)=
\dfrac{(c-b)(b+1)}{(c)(c+2)}\omega\dfrac{F(a+2,b+2;c+3;\omega)}
{F(a+2,b+1;c+2;\omega)}
\end{align*}
Then, from Theorem $\ref{thm:structural relation for gk missing}$,
\begin{align*}
\mathcal{F}(2;\omega)
&=\dfrac{1}{1-(1-k_0)k_1\omega}-
\dfrac{(1-k_0)k_1\omega h(2;\omega)}{[1-(1-k_0)k_1z]h(2;\omega)-[1-(1-k_0)k_1z]^2}\\
&=\dfrac{c}{c-bz}-\dfrac{bczh(2;z)}{c(c-bz)h(2;z)-(c-bz)^2}
\end{align*}
which implies
\begin{align*}
\mathcal{F}(2;\omega)=
\dfrac{c}{c-b\omega}-
\dfrac{\dfrac{b(b+1)(c-b)}{c+2}\omega^2\dfrac{F(a+2,b+2;c+3;\omega)}
{F(a+2,b+1;c+2;\omega)}}
{\dfrac{(c-b)(b+1)(c-b\omega)}{c+2}\omega
\dfrac{F(a+2,b+2;c+3;\omega)}{F(a+2,b+1;c+2;\omega)}-(c-b\omega)^2}
\end{align*}
that is $\mathcal{F}(2;\omega)$ is given as a rational transformation of a new ratio of hypergeometric functions. It may also be noted that
for $-1\leq a\leq c$ and $0<b\leq c$,
\cite[Theorem 1.5]{Kustner-g-fractions-2002-CMFT},
both $\mathcal{F}(\omega)$ and
$\mathcal{F}(2;\omega)$
will map both the unit disk $\mathbb{D}$ and the half plane
$\{\omega\in\mathbb{C}:\mathrm{Re}\,\omega<1\}$ univalently onto domains that are convex in the direction of the imaginary axis.

As an illustration, we plot both these functions in figures
\eqref{fig:kustner ratio image} and \eqref{fig:own ratio image}.
\begin{center}
\begin{figure}[h!]
%\label{fig:perturbed mapping properties of carat functions}
  \begin{subfigure}[b]{0.38\textwidth}
    \includegraphics[width=\textwidth]{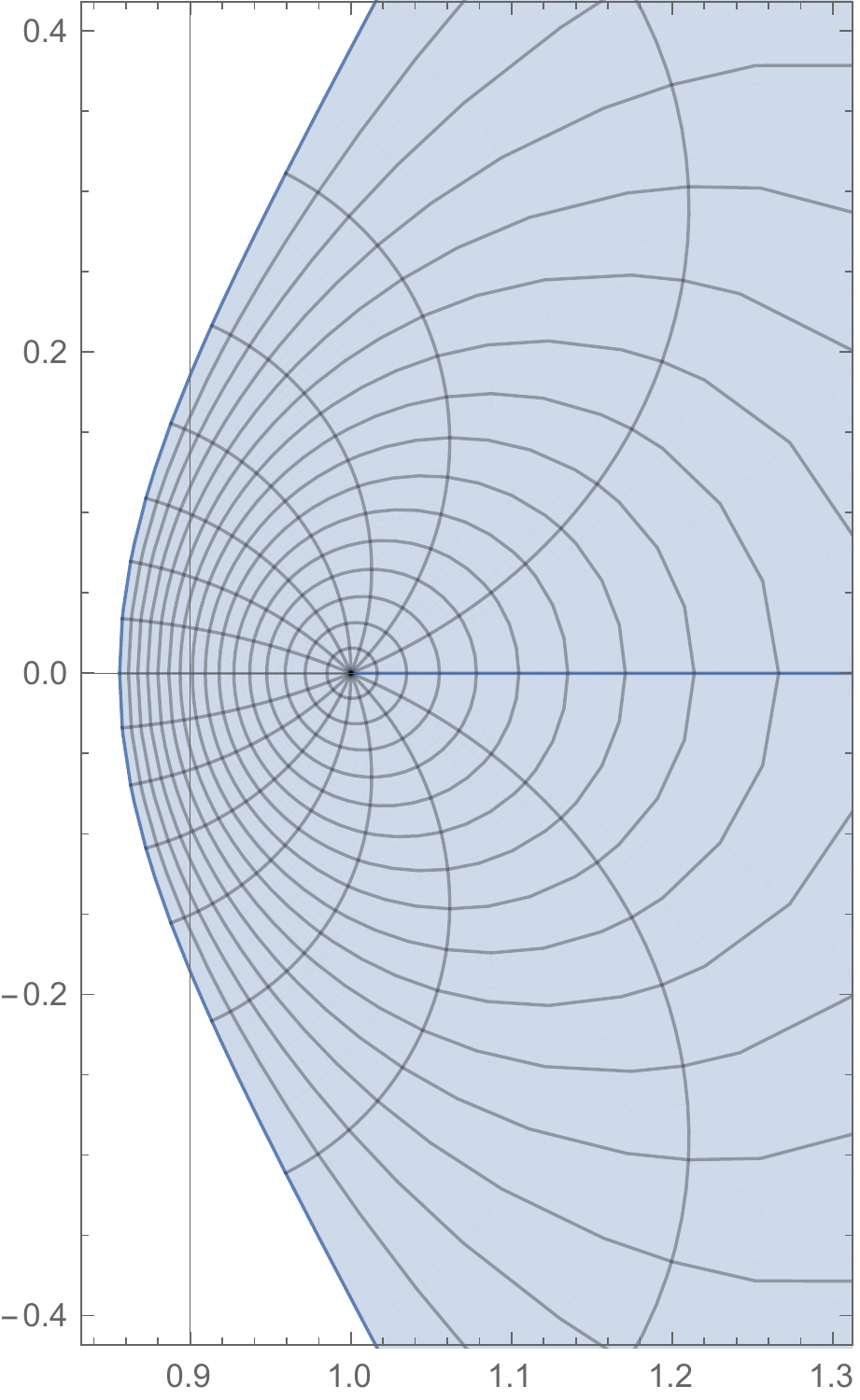}
    \caption{The function $\mathcal{F}(\omega)$}
    \label{fig:kustner ratio image}
  \end{subfigure}
  \hfill
  \begin{subfigure}[b]{0.4\textwidth}
    \includegraphics[width=\textwidth]{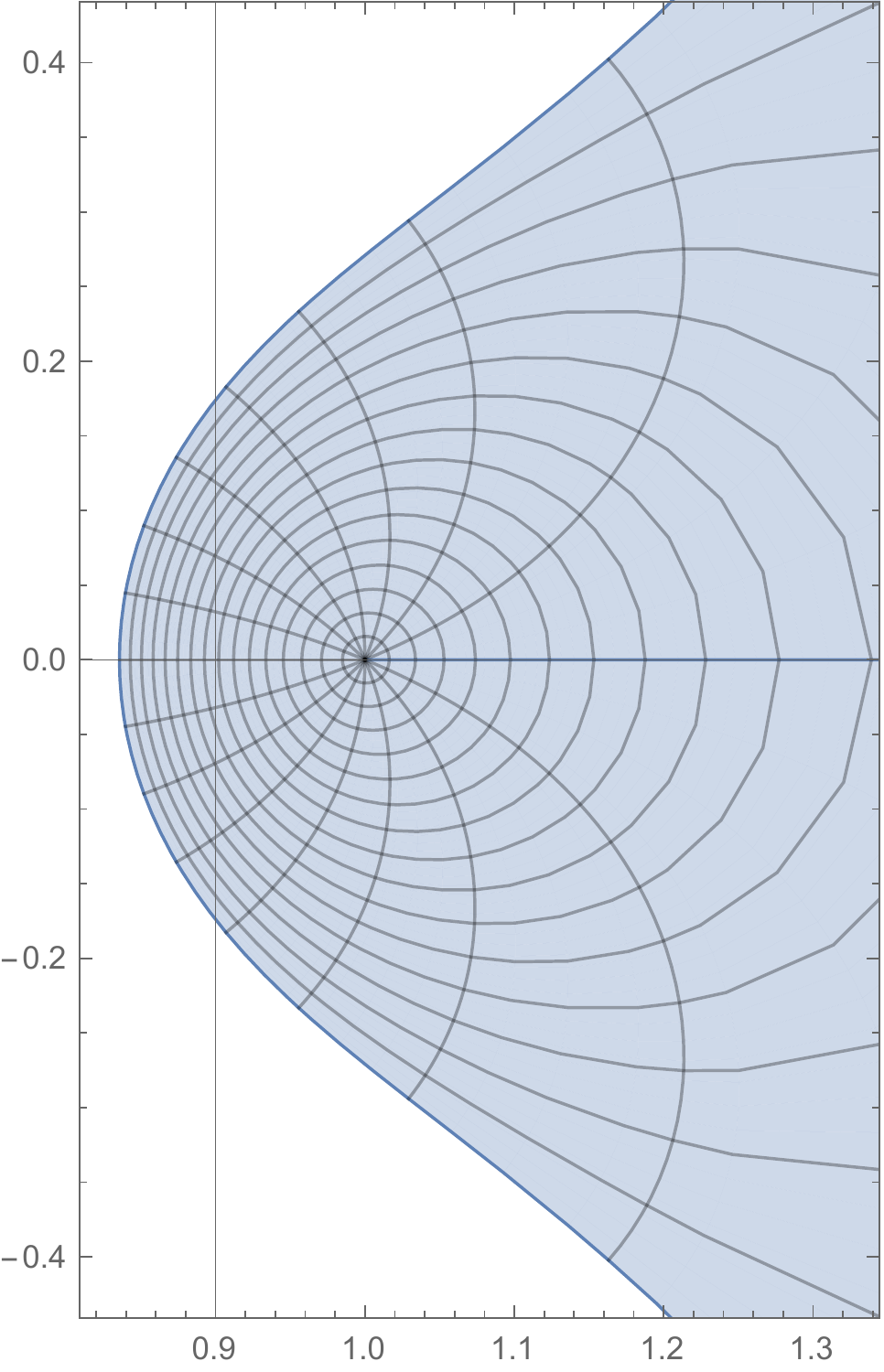}
    \caption{The function $\mathcal{F}(2;\omega)$}
    \label{fig:own ratio image}
  \end{subfigure}
  \caption{The images of the disc $|\omega|<0.999$ under the mappings $\mathcal{F}(\omega)$ and  $\mathcal{F}(2;z)$ for $a=0$, $b=0.1$,
  $c=0.4$.}
\end{figure}
\end{center}
\subsection{A class of Schur functions}
From Theorem $\ref{thm:gap g fraction identity with minimal parameters}$
we obtain
\begin{align*}
\dfrac{k_{2j+1}\omega}{1-\dfrac{(1-k_{2j+1})k_{2j+2}\omega}
                      {1-\dfrac{(1-k_{2j+2})k_{2j+3}\omega}
                      {1-\cdots}}}
&=1-\dfrac{F(a+j,b+j;c+2j;\omega)}{F(a+j+1,b+j;c+2j;\omega)}\\
&=\dfrac{b+j}{c+2j}\dfrac{\omega F(a+j+1,b+j+1;c+2j+1;\omega)}
                         {F(a+j+1,b+j,c+2j;\omega)}
\end{align*}
where the last equality follows from the contiguous relation
\eqref{eqn:contiguos relation in schur function and gap g fraction relation}
Hence using \cite[eqns. 3.3 and 5.1]{Wall-cf-and-bdd-analytic-function-1944-BAMS}
we get
\begin{align*}
\dfrac{1-z}{2}\dfrac{1-f_{2j}(z)}{1+zf_{2j}(z)}=
\dfrac{b+j}{c+2j}\dfrac{F(a+j+1,b+j+1;c+2j+1;\omega)}
{F(a+j+1,b+j,c+2j;\omega)},
\quad j\geq1,
\end{align*}
where $f_n(z)$ is the Schur function and
$\omega$ and $z$ are related as $\omega=-4z/(1-z)^2$.
Similarly, interchanging $a$ and $b$ in
\eqref{eqn:contiguos relation in schur function and gap g fraction relation}
we obtain
\begin{align*}
\dfrac{1-z}{2}\dfrac{1-f_{2j+1}(z)}{1+zf_{2j+1}(z)}=
\dfrac{a+j+1}{c+2j+1}\dfrac{F(a+j+2,b+j+1;c+2j+2;\omega)}
{F(a+j+1,b+j+1,c+2j+1;\omega)},
\quad j\geq0,
\end{align*}
where $\omega=-4z/(1-z)^2$.

Moreover, using the relation $\alpha_{j-1}=1-2k_j$, $j\geq1$,
the related sequence of Schur parameters is given by
\begin{align*}
\alpha_j=
\left\{
  \begin{array}{ll}
    \dfrac{c-2b}{c+j}, & \hbox{$j=2n$, $n\geq0$;} \\
    \dfrac{c-2a-1}{c+j}, & \hbox{$j=2n+1$, $n\geq1$.}
  \end{array}
\right.
\end{align*}
We note the following particular case. For $a=b-1/2$ and
$c=b$, the resulting Schur parameters are
$\alpha_{j}^{(b)}=-b/(b+j)$, $j\geq0$.
Such parameters have been considered in
\cite{Ranga-szego-polynomials-2010-AMS}
(when $b\in\mathbb{R}$) in the
context of orthogonal polynomials on the unit circle.
These polynomials are known in modern literature as
Szeg\"{o} polynomials and we suggest the interested readers to
refer \cite{Szego-book} for further information.

Finally, as an illustration we note that while the Schur function
associated with the parameters
$\{\alpha_j^{(b)}\}_{j\geq0}$ is
$f(z)=-1$,
that associated with the parameters
$\{\alpha^{(b)}_j\}_{j\geq1}$
is given by
\begin{align*}
\dfrac{1-z}{2}\dfrac{1-f^{(b)}(z)}{1+zf^{(b)}(z)}=
\dfrac{b+1/2}{b+1}\dfrac{F(b+3/2,b+1;b+2;\omega)}
                        {F(b+1/2,-;-;\omega)}
\end{align*}
where $\omega=-4z/(1-z)^2$.

We remark that in this section, specific illustration of the results given in Section~\ref{sec:gap g fractions and structural relations} are discussed leading to characterizing a class of ratio of hypergeometric functions. Similar characterization of functions involving the function $\omega=-4z/(1-z)^2$, given in Section~\ref{sec:Perturbed Schur parameters}
may provide some important consequences of such perturbation of $g$-fractions.


\begin{thebibliography}{99}
%
%
\bibitem{Andrews-Askey-Roy-book}
        G. E. Andrews, R. Askey\ and\ R. Roy,
        {\it Special functions},
        Encyclopedia of Mathematics and its Applications, 71,
        Cambridge Univ. Press, Cambridge, 1999.
%
%
\bibitem{Swami-mapping-prop-BHF-2014-JCA}
        \'A. Baricz\ and\ A. Swaminathan,
        Mapping properties of basic hypergeometric functions,
        J. Class. Anal.
        {\bf 5} (2014), no.~2, 115--128.
%
%
\bibitem{Castillo-pert-szego-rec-2014-jmaa}
         K. Castillo, On perturbed Szeg\H o recurrences,
         J. Math. Anal. Appl.
         {\bf 411} (2014), no.~2, 742--752.
%
%
\bibitem{Castillo-copolynomials-2015-jmaa}
        K. Castillo, F. Marcell\'an\ and\ J. Rivero,
        On co-polynomials on the real line,
        J. Math. Anal. Appl.
        {\bf 427} (2015), no.~1, 469--483.
%
%
\bibitem{Donoghue-interpolation-pick-function-rocky-mountain}
        W. F. Donoghue, Jr.,
        The interpolation of Pick functions,
        Rocky Mountain J. Math.
        {\bf 4} (1974), 169--173
%
%
\bibitem{Duren-book}
        P. L. Duren,
        Univalent functions,
        Grundlehren der Mathematischen Wissenschaften,
        259, Springer-Verlag, New York(1983).
%
%
\bibitem{Garza-Marcellan-szego-spectral-JCAM-2009}
        L. Garza\ and\ F. Marcell\'an,
        Szeg\H o transformations and rational spectral transformations for associated polynomials,
        J. Comput. Appl. Math.
        {\bf 233} (2009), no.~3, 730--738.
%
%
\bibitem{Ismail-Merkes-Styer-starlike-1990-complex-variables}
        M. E. H. Ismail, E. Merkes\ and\ D. Styer,
        A generalization of starlike functions,
        Complex Variables Theory Appl.
        {\bf 14} (1990), no.~1-4, 77-84.
%
%
\bibitem{JNT-Survey-Schur-PC-Szego}
        W. B. Jones, O. Nj\aa stad\ and\ W. J. Thron,
        Schur fractions, Perron Carath\'eodory fractions and Szeg\H o polynomials, a survey,
        in
        {\it Analytic theory of continued fractions, II (Pitlochry/Aviemore, 1985)},
        127--158, Lecture Notes in Math., 1199, Springer, Berlin.
%
%
\bibitem{Jones-Njasad-Thron-Moment-OP-CF-1989-BLMS}
        W. B. Jones, O. Nj\aa stad\ and\ W. J. Thron,
        Moment theory, orthogonal polynomials, quadrature, and continued fractions
        associated with the unit circle,
        Bull. London Math. Soc.
        {\bf 21} (1989), no.~2, 113--152.
%
%
\bibitem{Jones-Thron-book}
        W. B. Jones\ and\ W. J. Thron,
        {\it Continued fractions},
        Encyclopedia of Mathematics and its Applications, 11,
        Addison-Wesley Publishing Co., Reading, MA, 1980.
%
%
\bibitem{Kustner-g-fractions-2002-CMFT}
        R. K\"ustner,
        Mapping properties of hypergeometric functions and
        convolutions of starlike or convex functions of order $\alpha$,
        Comput. Methods Funct. Theory
        {\bf 2} (2002), no.~2, 597--610.
%
%
\bibitem{Kustner-g-fractions-JMAA-2007}
        R. K\"ustner,
        On the order of starlikeness of the shifted Gauss hypergeometric function,
        J. Math. Anal. Appl.
        {\bf 334} (2007), no.~2, 1363--1385.

%
%
\bibitem{Lisa-Waadeland-book-cf-with-application}
        L. Lorentzen\ and\ H. Waadeland,
        {\it Continued fractions with applications},
        Studies in Computational Mathematics,
        3, North-Holland, Amsterdam, 1992.
%
%
\bibitem{Merkes-AMS-1959-typically-real}
        E. P. Merkes,
        On typically-real functions in a cut plane,
        Proc. Amer. Math. Soc.
        {\bf 10} (1959), 863--868.
%
%
\bibitem{Njaastad-convernce-of-schur-algo-1990-PAMS}
        O. Nj\aa stad,
        Convergence of the Schur algorithm,
        Proc. Amer. Math. Soc.
        {\bf 110} (1990), no.~4, 1003--1007.
%
%
\bibitem{Schur-papers-1917-1918}
        J. Schur,
        \"{U}ber Potenzreihen dei im Inneren des Einheitskreises
        beschr\"{a}nkt sind,
        J. reine angewandte Math.
        {\bf 147} (1917), 205--232,
        {\bf 148} (1918/19), 122--145.
%
%
\bibitem{Simon-book-vol1}
        B. Simon,
        {\it Orthogonal polynomials on the unit circle. Part 1},
        American Mathematical Society Colloquium Publications, 54, Part 1,
        Amer. Math. Soc., Providence, RI, 2005.
%
%
\bibitem{Ranga-szego-polynomials-2010-AMS}
         A. Sri Ranga,
         Szeg\H o polynomials from hypergeometric functions,
         Proc. Amer. Math. Soc.
        {\bf 138} (2010), no.~12, 4259--4270.
%
%
\bibitem{Szego-book}
         G. Szeg\H o,
         {\it Orthogonal polynomials}, fourth edition,
         Amer. Math. Soc., Providence, RI, 1975.
%
%
\bibitem{Alexei-Runckel-points-RJ}
        A. V. Tsygvintsev,
        On the convergence of continued fractions at Runckel's points,
        Ramanujan J.
        {\bf 15} (2008), no.~3, 407--413.
%
%
\bibitem{Alexei-ABC-flow-2013-JAT}
        A. Tsygvintsev,
        Bounded analytic maps, Wall fractions and $ABC$-flow,
        J. Approx. Theory
        {\bf 174} (2013), 206--219.
%
%
\bibitem{Wall-cf-and-bdd-analytic-function-1944-BAMS}
        H. S. Wall,
        Continued fractions and bounded analytic functions,
        Bull. Amer. Math. Soc.
        {\bf 50} (1944), 110--119.
%
%
\bibitem{Wall_book}
        H. S. Wall,
        {\it Analytic Theory of Continued Fractions},
        D. Van Nostrand Company, Inc., New York, NY, 1948.
%
%
\bibitem{Zhedanov-rational-spectral-JCAM-1997}
        A. Zhedanov,
        Rational spectral transformations and orthogonal polynomials,
        J. Comput. Appl. Math.
        {\bf 85} (1997), no.~1, 67--86.
\end{thebibliography}
\end{document}